\newcommand{\mc}{\mathscr}
\newcommand{\mf}{\mathfrak}
\newcommand{\f}{\mathbb}
\newcommand{\cu}{\subseteq}
\newcommand{\serie}[1]{\{#1_{n}\}_n}
\newcommand{\GLT}{\sim_{GLT}}
\newcommand{\dacs}[2]{d_{acs}\left(#1,#2\right)}
\newcommand{\acs}{\xrightarrow{a.c.s.}}
\newcommand{\B}{\{B_{n,m}\}_{n,m}}
\newcommand{\ve}{\varepsilon}
\newcommand{\ea}{\equiv_{acs}}
\DeclareMathOperator{\rk}{rk}
\theoremstyle{definition}
\newtheorem{definizione}{Definition}[section]
\theoremstyle{plain}
\newtheorem{theorem}[definizione]{Theorem}
\newtheorem{lemma}[definizione]{Lemma}
\newtheorem{corollary}[definizione]{Corollary}
\newtheorem{conjecture}[]{Conjecture}
\title{Conjectures on Perturbations of Hermitian Sequences}
\author{Giovanni Barbarino}
\begin{document}

\maketitle

\section{Introduction}

A \textit{Matrix-Sequence} $ \serie A $ is an ordered collection of complex matrices such that $ A_n\in \mathbb C^{n\times n} $. 
We will denote by $\mathscr E$ the space of all matrix-sequences, 
\[
\mathscr  E := \{\serie{A} : A_n\in\mathbb C^{n\times n} \}.
\]
When dealing with linear differential equations and their discretization, several matrix sequences appear and often they are associated to some spectral \textit{Spectral Symbol}, that is a measurable function describing the asymptotic distribution of the eigenvalues of the sequence  in the Weyl sense \cite{BS,GLT-book,Tilliloc}. We recall that a spectral symbol associated with a sequence $\serie A$  is a measurable function $f:D\cu \mathbb R^q\to \mathbb C$, $q\ge 1$, satisfying 
\[
\lim_{n\to\infty} \frac{1}{n} \sum_{i=1}^{n} F(\lambda_i(A_n)) = \frac{1}{l(D)}\int_D F(f(x)) dx
\]
for every continuous function $F:\mathbb C\to \mathbb C$ with compact support, where $D$ is a measurable set with finite Lebesgue measure $l(D)>0$ and $\lambda_i(A_n)$ are the eigenvalues of $A_n$. In this case we write 
\[ \serie A\sim_\lambda f. \]

 The computation of such symbol is not trivial, so one can, for example, use the results regarding GLT sequences when dealing with Hermitian matrices, or analyse the singular values in the general case. Few is known about the eigenvalues of non-normal sequences, except in the case where the matrices are small perturbation of Hermitian matrices. 

\begin{theorem}\cite[Theorem~3.4]{golinskii}\label{gol}
	Let $\serie X$, $\serie Y$ be matrix-sequences and set $A_n=X_n+Y_n$. Assume
	that the following conditions are met.
	\begin{itemize}
		\item Every $X_n$ is an Hermitian matrix and $\serie X\sim_\lambda f$.
		\item $\|Y_n\|_{1} = o(n)$.
		\item $\|X_n\|,\|Y_n\|\le C$ for all $n$, where $C$ is a constant independent from $n$.
	\end{itemize}
	Then $\serie A\sim_\lambda f$. 
\end{theorem}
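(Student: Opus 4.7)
The plan is to prove $\serie A \sim_\lambda f$ by showing that the empirical eigenvalue measures $\nu_n := \frac{1}{n}\sum_i \delta_{\lambda_i(A_n)}$ converge weakly to the pushforward $\mu_f$ of $l(D)^{-1}\,dx$ under $f$. Since $\|A_n\|\le 2C$, all $\nu_n$ are supported in the compact disk $\{|z|\le 2C\}$, so $\{\nu_n\}$ is tight and it is enough to show that every weak subsequential limit coincides with $\mu_f$. I would do this in three stages: match the polynomial moments in $z$ by a trace estimate; force any limit to sit on $\mathbb{R}$ by a Schatten-$1$ bound on the imaginary parts; then close by moment uniqueness on the real line.

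For the first stage I would expand $A_n^k - X_n^k = (X_n+Y_n)^k - X_n^k$ as the sum of the $2^k-1$ non-commutative words of length $k$ in $X_n,Y_n$ that contain at least one factor $Y_n$, and use $\|UV\|_1 \le \|U\|\|V\|_1$ together with $\|X_n\|,\|Y_n\|\le C$ to obtain
\[
\|A_n^k-X_n^k\|_1 \le (2^k-1)\,C^{k-1}\,\|Y_n\|_1 = o(n).
\]
Combining $|\mathrm{tr}(M)|\le \|M\|_1$ with $\mathrm{tr}(M^k)=\sum_i \lambda_i(M)^k$ (immediate from any Schur decomposition of $M$) yields $\frac{1}{n}\mathrm{tr}(A_n^k) - \frac{1}{n}\mathrm{tr}(X_n^k)\to 0$. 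The bound $\|X_n\|\le C$ forces $|f|\le C$ a.e.\ on $D$ (test $\serie X\sim_\lambda f$ against any nonnegative continuous $F$ supported on $\{|z|>C\}$), so applying $\serie X\sim_\lambda f$ to any continuous compactly supported function agreeing with $z^k$ on $[-C,C]$ gives $\frac{1}{n}\mathrm{tr}(X_n^k)\to \frac{1}{l(D)}\int_D f^k$. Hence $\int z^k\,d\nu_n \to \int y^k\,d\mu_f$ for every $k\ge 0$.

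For the second stage, split $A_n = H_n + iK_n$ with Hermitian $H_n = X_n + (Y_n+Y_n^*)/2$ and $K_n = (Y_n-Y_n^*)/(2i)$, so $\|K_n\|_1 \le \|Y_n\|_1 = o(n)$. Taking a Schur decomposition $A_n = U_n T_n U_n^*$, the diagonal of $U_n^* K_n U_n = (T_n-T_n^*)/(2i)$ is exactly $(\mathrm{Im}\,\lambda_i(A_n))_i$, and the elementary inequality $\sum_i |M_{ii}|\le \|M\|_1$ (a one-line consequence of the SVD of $M$ together with AM--GM) gives
\[
\sum_i |\mathrm{Im}\,\lambda_i(A_n)| \le \|U_n^* K_n U_n\|_1 = \|K_n\|_1 = o(n).
\]
Equivalently $\int |\mathrm{Im}\,z|\,d\nu_n \to 0$, so every weak subsequential limit of $\{\nu_n\}$ is concentrated on $\mathbb{R}$.

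Combining the two stages, any weak limit $\nu$ of $\{\nu_n\}$ is a compactly supported probability measure on $\mathbb{R}$ with $\int y^k\,d\nu = \int y^k\,d\mu_f$ for every $k\ge 0$, whence $\nu = \mu_f$ by the Hausdorff moment problem; by tightness the full sequence converges, which is precisely $\serie A\sim_\lambda f$. The stage I expect to be the real obstacle is the imaginary-part estimate: the polynomial trace identities of stage one only pin down the holomorphic moments in $z$, which for a measure on $\mathbb{C}$ do not determine the distribution, and it is precisely the hypothesis $\|Y_n\|_1 = o(n)$ (together with the Hermitian structure of $X_n$) that feeds into the Schatten-$1$ diagonal bound and forces the limit onto the real axis, unlocking the real moment argument.
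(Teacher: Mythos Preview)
The paper does not give its own proof of this theorem: it is quoted verbatim from \cite{golinskii} as background for the perturbation problem, and the body of the paper moves on to its generalizations (Theorem~\ref{th:pert}, Lemma~\ref{th:pert2}) and to the conjectures. So there is no in--paper argument to compare against.

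That said, your proposal is correct and is essentially the classical argument of Golinskii and Serra-Capizzano. The three stages --- trace-norm control of $(X_n+Y_n)^k-X_n^k$ to match holomorphic moments, the Schatten-$1$ diagonal bound on the skew-Hermitian part to force weak limits onto $\mathbb{R}$, and compactly supported real moment uniqueness --- are precisely the mechanism behind the cited result. Two minor remarks: in Stage~2 the inequality $\sum_i |M_{ii}|\le \|M\|_1$ follows from the SVD together with Cauchy--Schwarz (not AM--GM) on $\sum_i |(u_k)_i||(v_k)_i|$; and in Stage~3 you should state explicitly that $z\mapsto z^k$ is continuous on the fixed compact disk $\{|z|\le 2C\}$, so weak subsequential convergence of $\nu_n$ passes to $\int z^k\,d\nu_n$, which is what lets you transfer the moment identities from Stage~1 to the limit measure $\nu$ before invoking its support on $\mathbb{R}$.
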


The  applications of the previous result are countless, but further experiments showed that the conditions on the spectral norms seemed technical and irrelevant.  Successive studies led in fact to more powerful results.

\begin{theorem} \cite{Per1}\label{th:pert}
	Let $\serie X$, $\serie Y$ be matrix-sequences and set $A_n=X_n+Y_n$. Assume
	that the following conditions are met.
	\begin{itemize}
		\item Every $X_n$ is an Hermitian matrix and $\serie X\sim_\lambda f$.
		\item $\|Y_n\|_{1} = o(\sqrt{n})$.
	\end{itemize}
	Then $\serie A\sim_\lambda f$. 
\end{theorem}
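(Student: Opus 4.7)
My plan is to reduce Theorem~\ref{th:pert} to Theorem~\ref{gol} by a double truncation, then to dispose of the resulting low-rank residue through Hermitization (the log-potential method). Fix $M,\tau>0$. Let $X_n^{(M)}$ denote the spectral truncation of $X_n$ obtained by replacing every eigenvalue of modulus $>M$ with $0$, and let $Y_n^{(\tau)}$ denote the SVD-truncation of $Y_n$ obtained by dropping every singular component with $\sigma_i>\tau$. Set $B_n:=X_n^{(M)}+Y_n^{(\tau)}$. Then $X_n^{(M)}$ is Hermitian with $\|X_n^{(M)}\|\le M$, and a direct test-function computation gives $\{X_n^{(M)}\}_n\sim_\lambda f^{(M)}:=f\chi_{|f|\le M}$; while $\|Y_n^{(\tau)}\|\le\tau$ and $\|Y_n^{(\tau)}\|_1\le\|Y_n\|_1=o(\sqrt n)=o(n)$. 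Theorem~\ref{gol} thus applies to $B_n$ and yields $\serie B\sim_\lambda f^{(M)}$.

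\textbf{Low-rank residue.} Next I bound the rank of the error $A_n-B_n=(X_n-X_n^{(M)})+(Y_n-Y_n^{(\tau)})$:
\[
\rk(A_n-B_n)\ \le\ \#\{i:\ |\lambda_i(X_n)|>M\}+\#\{i:\ \sigma_i(Y_n)>\tau\}.
\]
By $\serie X\sim_\lambda f$ the first summand is asymptotic to $n\,l(\{|f|>M\})/l(D)$, which becomes arbitrarily small as a fraction of $n$ once $M$ is taken large. The second is bounded by $\|Y_n\|_1/\tau=o(\sqrt n)/\tau=o(n)$ for every fixed $\tau$. Hence $\rk(A_n-B_n)/n$ can be made $\le\varepsilon$ eventually, for any preassigned $\varepsilon>0$.

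\textbf{Hermitization.} The remaining step is to transfer the spectral distribution from $B_n$ to $A_n$. Since $A_n$ is non-normal, I rely on the log-potential identity $\frac{1}{n}\log|\det(C-zI)|=\frac{1}{n}\sum_i\log\sigma_i(C-zI)$, which recovers the eigenvalue distribution of $C$ from the singular-value distribution of the Hermitization $C-zI$, for a.e.\ $z\in\mathbb C$. Mirsky's theorem gives
\[
\sum_{i=1}^n \bigl(\sigma_i(A_n-zI)-\sigma_i(X_n-zI)\bigr)^2 \le \|Y_n\|_2^2 \le \|Y_n\|_1^2 = o(n),
\]
so Cauchy--Schwarz yields an $\ell^1$-distance $o(n)$ between the sorted singular-value sequences; the analogous bound holds between $B_n-zI$ and $X_n^{(M)}-zI$. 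Combined with the singular-value interlacing under a rank-$\varepsilon n$ perturbation, this forces $A_n-zI$ and $B_n-zI$ to share the same limiting singular-value distribution, namely $|f^{(M)}-z|$. Letting $M\to\infty$ recovers $|f-z|$, and the log-potential identity then yields $\serie A\sim_\lambda f$.

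\textbf{Main obstacle.} The delicate point is precisely this final Hermitization step: since $\log\sigma$ is unbounded at $0$, the log-potential can be badly affected by anomalously small singular values of $A_n-zI$. The strengthened assumption $\|Y_n\|_1=o(\sqrt n)$ is exactly what makes the $\ell^2$ Mirsky estimate effective---through Cauchy--Schwarz it delivers the sublinear $\ell^1$ singular-value control, which together with a uniform integrability argument for $\log\sigma$ suffices to push the log-potentials of $A_n$ and $X_n$ to their common limit.
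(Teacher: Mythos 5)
Your argument breaks down exactly at the step you defer, and that step is the whole theorem. The distributional facts you assemble before it are both easy and insufficient: since $\|Y_n\|_1=o(\sqrt n)=o(n)$, the sequence $\serie Y$ is zero-distributed (Lemma \ref{zero:norm_1}), so $\{A_n-zI\}_n$ is acs-equivalent to $\{X_n-zI\}_n$ and $\{A_n-zI\}_n\sim_\sigma f-z$ for every $z$ with no truncation, no appeal to Theorem \ref{gol}, and no Mirsky/Cauchy--Schwarz bookkeeping. The problem is that knowing the limiting singular value distribution of all the shifts $A_n-zI$ does not determine the eigenvalue distribution of $\serie A$: to pass through the log-potential $\frac1n\sum_i\log\sigma_i(A_n-zI)$ you need uniform integrability of $\log\sigma$ at $0$, i.e.\ quantitative control of the smallest singular values of $A_n-zI$, and this cannot follow from an $\ell^1$ discrepancy of size $o(n)$ between sorted singular values --- such a bound is compatible with a vanishing proportion of singular values of $A_n-zI$ being zero or exponentially small, which wrecks the normalized log-determinant. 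The paper's own counterexamples make this concrete: $A_n=\frac2nT_n(e^{\mathrm{i}\theta})+2n^{n-1}e_ne_1^T$ is acs-equivalent to the zero sequence, hence $\{A_n-zI\}_n\sim_\sigma|z|$ for every $z$, yet $\serie A\sim_\lambda 2e^{2\pi \mathrm{i}x}$; similarly $J_n$ versus $J_n+e_ne_1^T$. So ``same limiting singular value distribution of every shift, plus a rank-$\ve n$ residue'' can never force $\serie A\sim_\lambda f$ by itself; the ``uniform integrability argument'' you invoke is precisely the missing proof. Note also that nothing in your sketch uses $o(\sqrt n)$ in a way that would not equally be claimed under $o(n)$ at the level of distributions, so if the final step were valid as stated it would settle Conjecture \ref{conj}, which the paper records as open --- a strong signal that the step cannot be justified by these means.

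For comparison: the paper does not prove Theorem \ref{th:pert} but cites \cite{Per1}, and the intended argument is of the same type the paper itself uses in Section 5 for normal sequences, working directly with eigenvalues rather than with Hermitization. For $X_n$ Hermitian and $Y_n$ arbitrary there is a Hoffman--Wielandt/Kahan-type inequality (one matrix Hermitian, the other arbitrary; see \cite{Bhatia}) giving a matching with $\min_{\sigma}\bigl(\sum_i|\lambda_i(A_n)-\lambda_{\sigma(i)}(X_n)|^2\bigr)^{1/2}\le c\,\|Y_n\|_2\le c\,\|Y_n\|_1=o(\sqrt n)$. Then the number $k_n$ of matched pairs at distance greater than $\ve$ satisfies $k_n\le o(n)/\ve^2$, so $d'(\serie A,\serie X)\le\ve$ for every $\ve>0$, hence $d'(\serie A,\serie X)=0$ and Theorem \ref{d'} yields $\serie A\sim_\lambda f$. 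This route uses the hypothesis $\|Y_n\|_1=o(\sqrt n)$ exactly where it is needed (to make the $\ell^2$ eigenvalue matching sublinear after the counting argument) and avoids small-singular-value issues entirely; if you want to salvage your write-up, replacing the Hermitization step by this matching argument is the way to do it.
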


\begin{lemma} \cite{Per1}\label{th:pert2}
	Let $\serie X$, $\serie Y$ be matrix-sequences and set $A_n=X_n+Y_n$. Assume
	that the following conditions are met.
	\begin{itemize}
		\item Every $X_n$ is an Hermitian matrix and $\serie X\sim_\lambda f$.
		\item $\|Y_n\|_{1} = o(n)$.
		\item $\|Y_n\|\le C$ for all $n$, where $C$ is a constant independent from $n$.
	\end{itemize}
	Then $\serie A\sim_\lambda f$. 
\end{lemma}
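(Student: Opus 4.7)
The plan is to reduce to Theorem~\ref{gol} by a spectral truncation of the (possibly unbounded) Hermitian part $X_n$. For each $M > 0$, let $g_M : \mathbb R \to \mathbb R$ denote the clipping $g_M(t) = \max(-M,\min(t,M))$ and set $X_n^{(M)} := g_M(X_n)$ via Hermitian functional calculus. Each $X_n^{(M)}$ is Hermitian with $\|X_n^{(M)}\|\le M$, and spectral mapping (applied to continuous test functions supported in $\{|z|\le K\}$ with $K<M$, for which $F\circ g_M$ retains compact support in $[-K,K]$) gives $\{X_n^{(M)}\}_n \sim_\lambda g_M\circ f$. For every fixed $M$, Theorem~\ref{gol} now applies to $X_n^{(M)} + Y_n$, since $\|X_n^{(M)}\|\le M$, $\|Y_n\|\le C$, and $\|Y_n\|_1 = o(n)$, yielding $\{X_n^{(M)}+Y_n\}_n \sim_\lambda g_M\circ f$.

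The residual $R_n^{(M)} := A_n - (X_n^{(M)}+Y_n) = X_n - g_M(X_n)$ is Hermitian of rank at most $\#\{i : |\lambda_i(X_n)|>M\}$. Since $\{X_n\}\sim_\lambda f$ and $f$ is finite almost everywhere, a standard tightness argument (approximating the indicator of $\{M<|t|\le L\}$ from below by continuous compactly supported functions and letting $L\to\infty$) gives
\[
\limsup_{n\to\infty}\; \frac{\rk(R_n^{(M)})}{n} \;\le\; \frac{l(\{x\in D : |f(x)|>M\})}{l(D)} \;=:\; c(M),
\]
with $c(M)\to 0$ as $M\to\infty$. Consequently $\{X_n^{(M)}+Y_n\}_n$ is an a.c.s.\ for $\{A_n\}_n$ as $M\to\infty$ (zero spectral-norm correction, rank correction $c(M)\,n$), while the symbols $g_M\circ f$ converge pointwise, hence in measure on $D$, to $f$.

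The delicate point is transferring this a.c.s.\ approximation into eigenvalue-symbol convergence for the limit, since the standard a.c.s.\ stability theorem for \emph{eigenvalues} requires Hermiticity of the approximating sequences and $X_n^{(M)}+Y_n$ is generally not Hermitian. To bypass this I would fix a continuous test function $F$ with support in $\{|z|\le K\}$, take $M > K$, and use $\|Y_n\|\le C$ to observe that any eigenvector $v$ of $A_n$ with eigenvalue inside $\text{supp}(F)$ satisfies $\|X_n v\|\le (K+C)\|v\|$. Hermitian functional calculus for $X_n$ then forces $v$ to lie within $(K+C)/M$ (in norm) of the spectral subspace $\mathrm{Ran}(P_n^{(M)})$ associated with $\{|\mu|\le M\}$. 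This localises the counting of such eigenvalues of $A_n$ to the compression $P_n^{(M)}(X_n^{(M)}+Y_n)P_n^{(M)}$, where the conclusion of Theorem~\ref{gol} can be transferred up to an $O((K+C)/M)$ error in each test-function integral; letting $M\to\infty$ and invoking $g_M\circ f\to f$ in measure closes the argument. The hypothesis $\|Y_n\|\le C$ is indispensable precisely for producing this $(K+C)/M$ localisation bound—in its absence, no finite truncation scale $M$ would control the eigenvectors of $A_n$.
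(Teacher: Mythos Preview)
The paper does not prove this lemma; it is quoted from \cite{Per1} as an external result, so there is no in-paper argument to compare your attempt against.

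On the merits of your attempt: the reduction to Theorem~\ref{gol} via spectral truncation is natural, and the first three steps are sound. The relation $\{X_n^{(M)}\}_n\sim_\lambda g_M\circ f$ holds (once one notes, via the tightness you invoke, that the empirical spectral measures of $X_n$ converge weakly and hence test against the bounded continuous function $F\circ g_M$), Theorem~\ref{gol} then applies to $X_n^{(M)}+Y_n$, and the rank estimate on $R_n^{(M)}$ is correct.

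The gap is in your Step~4. You correctly flag that the a.c.s.\ closure for eigenvalue symbols requires Hermiticity, and your localisation computation is valid: a unit eigenvector $v$ of $A_n$ with eigenvalue in $\{|z|\le K\}$ does satisfy $\|(I-P_n^{(M)})v\|\le (K+C)/M$. But the passage from this to ``an $O((K+C)/M)$ error in each test-function integral'' is unjustified. What you have actually shown is that each such eigenvalue $\lambda$ obeys $\|(P_n^{(M)}A_nP_n^{(M)}-\lambda)\,P_n^{(M)}v\|\le C(K+C)/M$, i.e.\ $\lambda$ is an \emph{approximate} eigenvalue of the compression. Since that compression is still non-normal, no bound on the distance from $\lambda$ to an actual eigenvalue of $P_n^{(M)}A_nP_n^{(M)}$ (let alone of $X_n^{(M)}+Y_n$) follows; the pseudospectrum of a non-normal operator can vastly exceed its spectrum. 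Nor does your argument run in the reverse direction to show that every eigenvalue of $X_n^{(M)}+Y_n$ inside $\{|z|\le K\}$ is matched, with multiplicity, by one of $A_n$. Without a two-sided comparison of the eigenvalue counting measures --- for instance via a Schur-complement/Rouch\'e argument exploiting that the $(I-P)(I-P)$ blocks of both $A_n$ and $X_n^{(M)}+Y_n$ are Hermitian-dominated with spectrum at distance $\ge M-C-K$ from $\{|z|\le K\}$ --- the last sentence does not close the proof. As written, Step~4 is a plausible heuristic rather than an argument.
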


Lemma \ref{th:pert2} is a direct generalization of Theorem \ref{gol}, but Theorem \ref{th:pert} holds under different hypothesis.  
The experiments executed in the same document suggested that a more powerful result may hold, namely

\begin{conjecture}\label{conj}
	Let $X_n$ be a Hermitian matrix of size $n$, with $\serie X\sim_\lambda f$. If $\|Y_n\|_1 = o( n)$ then 
	\[ \{X_n+Y_n\}_n \sim_\lambda f. \]
\end{conjecture}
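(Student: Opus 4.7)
The plan is to reduce Conjecture~\ref{conj} to Lemma~\ref{th:pert2} by splitting $Y_n$ into a piece of bounded operator norm (handled directly by Lemma~\ref{th:pert2}) and a low-rank residual that should not affect the spectral distribution.

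Fix $M>0$ and, using the SVD $Y_n = U_n \Sigma_n V_n^*$ with singular values $\sigma_1^{(n)}\ge \sigma_2^{(n)}\ge\dots$, define
\[
Y_n^{(M)} := U_n\diag(\min(\sigma_i^{(n)},M))V_n^*,\qquad R_n^{(M)} := Y_n - Y_n^{(M)}.
\]
Then $\|Y_n^{(M)}\|\le M$, $\|Y_n^{(M)}\|_1\le \|Y_n\|_1 = o(n)$, and $\rk(R_n^{(M)})\le \|Y_n\|_1/M$ (by Markov's inequality for singular values). Applying Lemma~\ref{th:pert2} to the pair $(X_n,Y_n^{(M)})$ yields $\{X_n+Y_n^{(M)}\}_n\sim_\lambda f$ for every fixed $M>0$. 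Since $\rk(R_n^{(M)})/n\le \|Y_n\|_1/(Mn)\to 0$ as $n\to\infty$ for each $M$, the family $\{X_n+Y_n^{(M)}\}_n$ approximates $\{X_n+Y_n\}_n$ in an a.c.s.-like sense, and the remaining task is to prove that this particular approximation preserves the spectral symbol: for every continuous, compactly supported $F:\mathbb C\to\mathbb C$,
\[
\lim_{M\to\infty}\limsup_{n\to\infty}\left|\frac{1}{n}\sum_{i=1}^n F(\lambda_i(X_n+Y_n)) - \frac{1}{n}\sum_{i=1}^n F(\lambda_i(X_n+Y_n^{(M)}))\right|=0.
\]

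The displayed limit is the main obstacle. For Hermitian matrices, Weyl's interlacing immediately yields that a rank-$k$ perturbation shifts the eigenvalue counting measure by at most $k/n$; but in the non-Hermitian setting even a rank-one perturbation can in principle move every single eigenvalue. The natural line of attack is the Sylvester determinant identity
\[
\det(\lambda I-(X_n+Y_n)) = \det(\lambda I-(X_n+Y_n^{(M)}))\,\det\bigl(I_k - Q(\lambda I-(X_n+Y_n^{(M)}))^{-1}P\bigr),
\]
where $R_n^{(M)}=PQ$ with $k=\rk(R_n^{(M)})$, combined with a Rouch\'e-type argument on suitable contours in $\mathbb C$ enclosing the bulk support of $f$. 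Making this work requires a quantitative pseudospectral/resolvent bound on $X_n+Y_n^{(M)}$, exploiting that it is a bounded-norm, trace-class perturbation of the Hermitian $X_n$ with $\{X_n\}_n\sim_\lambda f$. Producing such a resolvent estimate---sharp enough to keep the inner $k\times k$ determinant under control on the chosen contours while $M$ is eventually sent to infinity---is where the proof will either close or stall.
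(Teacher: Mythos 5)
You have not proved the statement, and indeed you could not have closed it this way: in the paper this is Conjecture~\ref{conj}, an \emph{open} problem, not a theorem with a proof. The known results are precisely Theorem~\ref{th:pert} (trace norm $o(\sqrt n)$, no norm bound) and Lemma~\ref{th:pert2} (trace norm $o(n)$ plus $\|Y_n\|\le C$), and your truncation $Y_n=Y_n^{(M)}+R_n^{(M)}$ with $\|Y_n^{(M)}\|\le M$, $\rk(R_n^{(M)})\le\|Y_n\|_1/M$ only repackages the distance between these two results. The entire content of the conjecture is the step you defer: showing that the rank-$o(n)$ (and trace-norm $o(n)$) residual $R_n^{(M)}$ does not move the spectral distribution of the \emph{non-Hermitian} sequence $\{X_n+Y_n^{(M)}\}_n$. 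This is exactly where all the known machinery breaks: the a.c.s.\ closure lemma for spectral symbols requires the approximants and the limit to be Hermitian, and the counterexamples in Section~3.1 of the paper (e.g.\ $P_nJ_nP_n^T\sim_\lambda 0$ while $P_nJ_nP_n^T+P_ne_ne_1^TP_n^T\sim_\lambda e^{\mathrm i\theta}$, a rank-one, trace-norm-one perturbation) show that for non-Hermitian base sequences a low-rank, small-trace-norm perturbation can change the symbol completely. So the displayed limit you need is not a routine approximation step; any proof must exploit the specific structure of $X_n+Y_n^{(M)}$ as a bounded perturbation of a Hermitian matrix, and no such argument is supplied.

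Your proposed route via the Sylvester determinant identity plus a Rouch\'e argument also stalls for a concrete reason. The only general resolvent bound available for $X_n+Y_n^{(M)}$ is of the form $\|(\lambda I-(X_n+Y_n^{(M)}))^{-1}\|\le \mathrm{dist}(\lambda, W_n)^{-1}$, where $W_n$ is the numerical range, which is contained in an $M$-neighbourhood of the (real) numerical range of $X_n$; inside that strip the matrix is non-normal and its pseudospectrum can blow up, so there is no control of the $k\times k$ determinant on contours that pass through the bulk. But counting eigenvalues only with contours that stay at distance $\gtrsim M$ from the real axis cannot recover the limiting distribution of the eigenvalues, which lie precisely in the region you cannot enter---and the problem gets worse, not better, as $M\to\infty$. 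In short: the reduction to Lemma~\ref{th:pert2} is fine as far as it goes, but the remaining step is equivalent in difficulty to Conjecture~\ref{conj} itself, and the paper offers no proof of it either---it instead derives equivalent reformulations (Section~4) and partial results for normal perturbed sequences (Section~5).
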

Notice that the questions we want to discuss do not depend on the domain of the symbols, so from now on all the symbols are measurable function on $[0,1]$. Here we show some of the works done on the problem and similar results found on normal sequences.

\section{Prerequisites}

Throughout the paper, we use $\Re$ and $\Im$ to denote real and imaginary part of numbers, functions, matrices and sequences, where
\begin{align*}
k:\f C\to \f C \implies \Re(k) = \frac{k+k^*}{2},\Im(k) = \frac{k-k^*}{2\text i},\qquad &
M\in \f C^{n\times n} \implies 
\Re(M) = \frac{M+M^H}{2},\Im(M) = \frac{M-M^H}{2\text i},\\
\serie A\in \mc E \implies \Re(\serie A) = &\{ \Re(A_n)\}_n,\quad\Im(\serie A) =
\{ \Im(A_n) \}_n.
\end{align*}
We will use the formalization of GLT sequences given in \cite{Barbarino2017}. Fist, we need to survey the theory on spectral symbols, singular value symbols and approximating classes of sequences.
\subsection{Symbols}

A \textit{Singular Value Symbol} associated with a sequence $\serie A$ is a measurable functions $k:D\cu \f R^n\to \f C$, where $D$ is measurable set with finite non-zero Lebesgue measure, satisfying 
\[
\lim_{n\to\infty} \frac{1}{n} \sum_{i=1}^{n} F(\sigma_i(A_n)) = \frac{1}{l(D)}\int_D F(|k(x)|) dx
\]
for every continuous function $F:\f R\to \f C$ with compact support.
Here $l(D)$ is the Lebesgue measure of $D$, and 
\[\sigma_1(A_n)\ge \sigma_2(A_n)\ge\dots\ge \sigma_n(A_n)\]
are the singular values in non-increasing order. In this case, we will say that $\serie A$ has singular value symbol $k$ and we will write
\[\serie A\sim_\sigma k.\]
A \textit{Spectral Symbol} is a measurable function in the Weyl sense describing the asymptotic distribution of the eigenvalues of $\serie A$   \cite{BS,GLT-book,Tilliloc}. It is a function $f:D\cu \mathbb R^q\to \mathbb C$, $q\ge 1$, satisfying 
\[
\lim_{n\to\infty} \frac{1}{n} \sum_{i=1}^{n} F(\lambda_i(A_n)) = \frac{1}{l(D)}\int_D F(f(x)) dx
\]
for every continuous function $F:\mathbb C\to \mathbb C$ with compact support, where $D$ is a measurable set with finite Lebesgue measure $l(D)>0$ and $\lambda_i(A_n)$ are the eigenvalues of $A_n$. In this case we write 
\[ \serie A\sim_\lambda f. \]
The functions $k,f$ in general are not uniquely determined, since the definition are distributional, so two functions with the same distribution are always simultaneously symbols of the same sequence.
In general, we say that a function $k$ is a rearranged version of $h$ if they have the same domain $D$ of finite non-zero Lebesgue measure, and the same distribution, meaning 
\begin{equation}\label{ergodic}
\frac{1}{l(D)}\int_D F(h(x)) dx =  \frac{1}{l(D)}\int_D F(k(x)) dx
\end{equation}
for every continuous function $F:\f C\to \f C$ with compact support. It is clear that given a sequence $\serie A\sim_\lambda h$, it holds 
\[
\serie A\sim_\lambda k \iff k \text{ rearranged version of }h
\]
and if $\serie B\sim_\sigma h$, then
\[
\serie B\sim_\sigma k \iff |k| \text{ rearranged version of }|h|.
\]\\

\noindent Three famous classes of sequences that admit a symbol are the following.
\begin{itemize}
	\item Given a function $f$ in $L^1([-\pi,\pi])$, its associated Toeplitz sequence is $\{T_n(f)\}_n$, where
	\[
	T_n( f ) = [ f_{i-j} ]^n_{i, j=1}, \qquad f_k = \frac{1}{2\pi} \int_{-\pi}^{\pi} f(\theta) e^{-\text i k\theta} d\theta.
	\]
	We know that $\{T_n(f)\}_n\sim_\sigma f$\cite{szego}, and if $f$ is real-valued, then $T_n(f)$ are Hermitian matrices and  $\{T_n(f)\}_n\sim_\lambda f$\cite{vassalos}.
	\item Given any a.e.\ continuous function $a:[0,1]\to\mathbb C$, its associated diagonal sampling sequence is $\{D_n(a)\}_n$, where
	\[
	D_n(a) = \mathop{\rm diag}_{i=1,\ldots,n}a\Bigl(\frac in\Bigr).
	\]
	We get $\{D_n(a)\}_n\sim_{\sigma,\lambda}a(x)$ since $D_n(a)$ are normal matrices.
	\item  A zero-distributed sequence is a matrix-sequence such that $\{Z_n\}_n\sim_\sigma0$, i.e.,
	\[ \lim_{n\to\infty}\frac1n\sum_{i=1}^nF(\sigma_i(Z_n))=F(0) \]
	for every continuous function $F:\mathbb R\to\mathbb C$ with compact support. If $Z_n$ are normal matrices, then $\{Z_n\}_n\sim_\lambda0$ holds too.
\end{itemize}

\subsection{Approximating Classes of Sequences}

Let $\mc C_D$ be the space of matrix sequences that admit a spectral symbol on a fixed domain $D$. It has been shown to be closed with respect to a notion of convergence called the Approximating Classes of Sequences (acs) convergence. This notion and this result are due to Serra-Capizzano, but were
actually inspired by Tilli’s pioneering paper on LT sequences \cite{Tilliloc}. Given a sequence of matrix sequences $\B$, it is said to be acs convergent to $\serie A$ if there exists a sequence $\{N_{n,m}\}_{n,m}$ of "small norm" matrices and a sequence  $\{R_{n,m}\}_{n,m}$ of "small rank" matrices such that for every $m$ there exists $n_m$ with
\[
A_n = B_{n,m}  + N_{n,m} + R_{n,m}, \qquad \|N_{n,m}\|\le \omega(m), \qquad \rk(R_{n,m})\le nc(m)
\]
for every $n>n_m$, and
\[
\omega(m)\xrightarrow{m\to \infty} 0,\qquad c(m)\xrightarrow{m\to \infty} 0.
\]
In this case, we will use the notation $\B\acs \serie A$. The result of closeness can be expressed as
\begin{lemma}
	If $\B \sim_\sigma k_m$ for every $m$, $k_m\to k$ in measure, and $\B\acs \serie A$, then $\serie A\sim_\sigma k$.
\end{lemma}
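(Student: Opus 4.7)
The plan is to verify the defining limit for $\serie A\sim_\sigma k$: for every continuous $F:\f R\to \f C$ with compact support,
\[
\frac{1}{n}\sum_{i=1}^n F(\sigma_i(A_n)) \xrightarrow{n\to\infty} \frac{1}{l(D)}\int_D F(|k(x)|)\,dx.
\]
I would interpolate through the two intermediate quantities $\frac{1}{n}\sum_i F(\sigma_i(B_{n,m}))$ (letting $n\to\infty$ first, for fixed $m$) and $\frac{1}{l(D)}\int_D F(|k_m|)\,dx$ (letting $m\to\infty$ second), and estimate each passage separately.

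For fixed $m$ and $n>n_m$, the acs decomposition $A_n=B_{n,m}+N_{n,m}+R_{n,m}$ together with singular-value perturbation inequalities should yield
\[
\limsup_{n\to\infty}\left|\frac{1}{n}\sum_i F(\sigma_i(A_n)) - \frac{1}{n}\sum_i F(\sigma_i(B_{n,m}))\right| \le \eta_F(m),
\]
with $\eta_F(m)\to 0$ as $m\to\infty$. The norm component is handled pointwise in $i$: Weyl's inequality gives $|\sigma_i(X+N)-\sigma_i(X)|\le\|N\|\le\omega(m)$, and uniform continuity of $F$ converts this into an $\omega_F(\omega(m))$ contribution. The rank component uses the interlacing $\sigma_{i+r}(X)\le\sigma_i(X+R)\le\sigma_{i-r}(X)$, which translates into the counting-function bound $|\,\#\{i:\sigma_i(X)>t\}-\#\{i:\sigma_i(X+R)>t\}\,|\le r=\rk R_{n,m}\le nc(m)$ for every $t\ge 0$. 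I would then partition the compact support of $F$ into $K/\delta$ subintervals, approximate $F$ by a step function with uniform error $\omega_F(\delta)$, and apply the counting-function bound on each piece, obtaining a contribution of order $\omega_F(\delta)+c(m)\|F\|_\infty/\delta$; with, say, $\delta=\sqrt{c(m)}$, both summands vanish as $m\to\infty$.

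Combined with $\B\sim_\sigma k_m$, passing $n\to\infty$ for fixed $m$ gives
\[
\limsup_{n\to\infty}\left|\frac{1}{n}\sum_i F(\sigma_i(A_n)) - \frac{1}{l(D)}\int_D F(|k_m(x)|)\,dx\right| \le \eta_F(m).
\]
Finally, letting $m\to\infty$: since $k_m\to k$ in measure and $F\circ|\cdot|$ is bounded and continuous, $F(|k_m|)\to F(|k|)$ in measure, and bounded convergence yields $\int_D F(|k_m|)\,dx\to\int_D F(|k|)\,dx$; together with $\eta_F(m)\to 0$ the conclusion $\serie A\sim_\sigma k$ follows.

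The technical heart is the rank-perturbation estimate. Unlike norm perturbations, which are tame pointwise in $i$, a rank-$r$ correction can reposition individual singular values wildly, so one cannot argue index by index and must instead work at the level of cumulative distributions. The step-function plus modulus-of-continuity trick above is the standard way to convert the counting-function inequality into a quantitative bound on $|\frac{1}{n}\sum F(\sigma_i(X+R))-\frac{1}{n}\sum F(\sigma_i(X))|$ that vanishes whenever $r/n\to 0$; everything else in the proof is soft (Weyl, uniform continuity, dominated convergence).
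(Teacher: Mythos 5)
Your argument is correct: the interpolation through $\frac1n\sum_iF(\sigma_i(B_{n,m}))$ and $\int_DF(|k_m|)$, with Weyl's inequality for the small-norm part, the rank-$r$ interlacing/counting-function bound combined with a step-function approximation of $F$ for the small-rank part, and bounded convergence in measure for the final limit, is exactly the standard proof of this closure result. The paper itself states the lemma without proof, citing the literature (Serra-Capizzano, Tilli, and the GLT book), and your proposal reproduces that canonical argument with no gaps beyond trivial edge cases (e.g.\ $c(m)=0$, where any small $\delta$ works).
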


\begin{lemma}
	If $\B \sim_\lambda k_m$ for every $m$, $k_m\to k$ in measure, and $\B\acs \serie A$, where all the matrices $B_{n,m}$ and $A_n$ are Hermitian,  then $\serie A\sim_\lambda k$.
\end{lemma}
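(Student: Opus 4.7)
The plan is to mimic the proof of the preceding singular-value lemma, replacing singular-value perturbation bounds with the corresponding Weyl-type eigenvalue inequalities for Hermitian matrices.

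\textbf{Step 1: Hermitian reduction.} Given a decomposition $A_n=B_{n,m}+N_{n,m}+R_{n,m}$ witnessing $\B\acs\serie A$, I would first observe that since $A_n-B_{n,m}$ is Hermitian, taking Hermitian parts yields the decomposition $A_n=B_{n,m}+\Re(N_{n,m})+\Re(R_{n,m})$ with $\|\Re(N_{n,m})\|\le\omega(m)$ and $\rk(\Re(R_{n,m}))\le 2\rk(R_{n,m})\le 2nc(m)$. Hence I may replace $N_{n,m}$ and $R_{n,m}$ by their Hermitian parts, which merely replaces $c(m)$ by $2c(m)$ and preserves the acs convergence.

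\textbf{Step 2: eigenvalue perturbation bound.} Assuming now that $N_{n,m}$ and $R_{n,m}$ are Hermitian, for any $L$-Lipschitz $F:\f R\to\f C$ with compact support I would introduce the intermediate Hermitian matrix $C_{n,m}:=B_{n,m}+R_{n,m}$ and combine two classical bounds. Weyl's inequality gives $|\lambda_i(A_n)-\lambda_i(C_{n,m})|\le\|N_{n,m}\|\le\omega(m)$, whence $|\tfrac1n\sum_iF(\lambda_i(A_n))-\tfrac1n\sum_iF(\lambda_i(C_{n,m}))|\le L\omega(m)$. The rank-$r$ generalization of Weyl's inequality for Hermitian matrices yields $\lambda_{i+r}(C_{n,m})\le\lambda_i(B_{n,m})\le\lambda_{i-r}(C_{n,m})$ with $r=\rk(R_{n,m})$, so the Kolmogorov distance between the empirical spectral distributions of $C_{n,m}$ and $B_{n,m}$ is at most $\rk(R_{n,m})/n\le 2c(m)$; a Riemann-Stieltjes integration by parts then gives $|\tfrac1n\sum_iF(\lambda_i(C_{n,m}))-\tfrac1n\sum_iF(\lambda_i(B_{n,m}))|\le K_F c(m)$ for a constant $K_F$ depending only on $F$ (of order $V(F)$). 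Altogether,
\[
\Bigl|\tfrac1n\sum_iF(\lambda_i(A_n))-\tfrac1n\sum_iF(\lambda_i(B_{n,m}))\Bigr|\le L\omega(m)+K_Fc(m).
\]

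\textbf{Step 3: taking limits.} For each fixed $m$, the hypothesis $\{B_{n,m}\}_n\sim_\lambda k_m$ gives $\lim_n\tfrac1n\sum_iF(\lambda_i(B_{n,m}))=\int_0^1 F(k_m(x))\,dx$. Since $k_m\to k$ in measure and $F$ is bounded continuous, dominated convergence gives $\int_0^1 F(k_m)\,dx\to\int_0^1 F(k)\,dx$ as $m\to\infty$, while the error $L\omega(m)+K_Fc(m)$ vanishes. Hence $\tfrac1n\sum_iF(\lambda_i(A_n))\to\int_0^1 F(k)\,dx$ for every Lipschitz compactly supported $F$, and a standard uniform-approximation argument (Lipschitz functions are dense in $C_c(\f R)$ under the sup norm) extends the conclusion to all continuous compactly supported $F$, establishing $\serie A\sim_\lambda k$.

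The main obstacle I anticipate is Step 1: the hypothesis that both $A_n$ and $B_{n,m}$ are Hermitian is precisely what lets us symmetrize $N_{n,m}$ and $R_{n,m}$ while keeping their norm and rank under control. Once Hermiticity of the perturbations is secured, the rest is a routine transcription of the singular-value proof with Weyl/Cauchy interlacing replacing Ky Fan/Mirsky inequalities.
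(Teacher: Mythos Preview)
The paper does not actually prove this lemma: it is stated in Section~2.2 as a prerequisite from the existing GLT literature (the standard reference is \cite{GLT-book}), so there is no ``paper's own proof'' to compare against.

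That said, your argument is correct and is essentially the standard proof one finds in the literature. The three steps---Hermitian symmetrization of the small-norm and small-rank pieces (using that $A_n-B_{n,m}$ is Hermitian), Weyl's inequality for the small-norm perturbation together with rank-$r$ interlacing for the small-rank perturbation, and an $\varepsilon/3$ limit argument combining $\{B_{n,m}\}_n\sim_\lambda k_m$ with $k_m\to k$ in measure---are exactly the ingredients used in, e.g., Garoni--Serra-Capizzano. Two minor remarks: in Step~2 your rank bound after symmetrization is $2nc(m)$, so the constant in front of $c(m)$ should carry that factor of~2 (harmless); and in Step~3 the passage $\int F(k_m)\to\int F(k)$ from convergence in measure is justified via the sub-subsequence argument plus dominated convergence, which you might state explicitly. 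Neither point is a gap.
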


These results are central in the theory since they let us compute the symbols of acs limits, and it is useful when we can find simple sequences that converge to the wanted $\serie A$.

Given a matrix $A\in\f C^{n\times n}$, we can define the function
\[
p(A):= \min_{i=1,\dots,n+1}\left\{ \frac{i-1}{n} + \sigma_i(A) \right\}
\]
where, by convention, $\sigma_{n+1}(A)=0$.
Given a sequence $\serie A\in\mc E$, we can denote
\[
\rho\left(\serie A\right):= \limsup_{n\to \infty} p(A_n).
\]
This allows us to introduce a pseudometric $d_{acs}$ on $\mc E$
\[
\dacs{\serie{A}}{\serie{B}} = \rho\left(\{A_n-B_n\}_n\right).
\]
It has been proved (\cite{joint-Albrecht},\cite{Garoni}) that this pseudodistance induces the acs convergence already introduced. 
Moreover, this pseudodistance is complete over $\mc E$, and consequentially it is complete over any closed subspace (see \cite{Barbarino2017} and \cite{BG-ELA2017} for proof and further details). The completeness is an important property, and it is a corollary of the following results (\cite{TR},\cite{barb}).
\begin{lemma}\label{mn}
	Let $\B$ be a sequence of matrix sequences that is a Cauchy sequence with respect to the pseudometric $d_{acs}$. There exists a crescent map $m:\f N\to \f N$ with $\lim_{n\to\infty} m(n) = \infty$ such that for every crescent map $m':\f N\to \f N$ that respects
	\begin{itemize}
		\item $m'(n)\le m(n) \quad \forall n $
		\item $\lim_{n\to\infty} m'(n) = \infty$
	\end{itemize}
	we get
	\[
	\B \acs \{ B_{n,m'(n)} \}_n.
	\]
\end{lemma}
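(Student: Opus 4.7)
The plan is to exploit the Cauchy hypothesis to pick a sparse sequence of ``pivot'' indices $M_k$ along which the matrix-sequences $\{B_{n,M_k}\}_n$ are summably close in $d_{acs}$, and then build $m(n)$ as a slowly growing step function tracking those pivots. The central tool is a triangle inequality for $p$ itself (and hence for $\rho$ and $d_{acs}$), which follows from the Weyl-type singular value bound $\sigma_{i+j-1}(A+B) \le \sigma_i(A)+\sigma_j(B)$ combined with the convention $\sigma_{n+1}=0$ and the trivial estimate $p(\cdot)\le 1$ that handles the border case $i+j-1>n+1$.

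First I would use the Cauchy assumption to choose a strictly increasing $M_k\to\infty$ such that
\[ \rho(\{B_{n,m_1}-B_{n,m_2}\}_n) < 4^{-k}\quad\text{for all } m_1,m_2\ge M_k. \]
The summable decay $4^{-k}$ is essential (a slower choice like $1/k$ would make the telescoping below diverge). Since each block $[M_k,M_{k+1}]\cap\mathbb{N}$ is finite, the $\limsup$ definition of $\rho$ yields a strictly increasing $N_k\to\infty$ such that
\[ p(B_{n,m_1}-B_{n,m_2}) < 2\cdot 4^{-k}\quad\text{for all } n\ge N_k \text{ and } m_1,m_2\in[M_k,M_{k+1}]\cap\mathbb{N}. \]
Then I define $m(n):=M_k$ for $n\in[N_k,N_{k+1})$ (with $m(n):=M_1$ for $n<N_1$); this is non-decreasing with $m(n)\to\infty$, and can be perturbed slightly if the definition of crescent requires strict monotonicity.

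To verify the conclusion, fix any crescent $m'\le m$ with $m'(n)\to\infty$. Given $j$, for $n$ large enough that $m'(n)\ge M_j$ and $n\in[N_k,N_{k+1})$ for some $k\ge j$, write $M_i\le m'(n)< M_{i+1}$ with $j\le i\le k$. Telescoping through the pivots and applying the block-uniform estimate gives
\[ p(B_{n,M_j}-B_{n,m'(n)}) \le \sum_{l=j}^{i-1} p(B_{n,M_l}-B_{n,M_{l+1}}) + p(B_{n,M_i}-B_{n,m'(n)}) < \sum_{l=j}^{\infty} 2\cdot 4^{-l} = \tfrac{8}{3}\cdot 4^{-j}. \]
Taking $\limsup_n$ shows $\rho(\{B_{n,M_j}-B_{n,m'(n)}\}_n)\le \tfrac{8}{3}\cdot 4^{-j}$. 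Combined with the Cauchy bound $d_{acs}(\{B_{n,m}\}_n,\{B_{n,M_j}\}_n)< 4^{-j}$ for all $m\ge M_j$ and the triangle inequality for $d_{acs}$, this yields $d_{acs}(\{B_{n,m}\}_n,\{B_{n,m'(n)}\}_n)\to 0$ as $m\to\infty$, which is the required acs convergence.

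The main obstacle I expect is the uniformity step: the Cauchy property only controls each pair $(m_1,m_2)$ through a $\limsup$ whose threshold depends on the pair, so the proof must leverage the finiteness of the blocks $[M_k,M_{k+1}]$ to extract a single threshold $N_k$ valid across each block. Once uniformity on blocks is secured, the only remaining delicate point is verifying the triangle inequality for $p$, where one must handle the degenerate index range $i+j-1>n+1$ by appealing to the trivial bound $p\le 1$ rather than to Weyl's inequality.
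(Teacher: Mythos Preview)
The paper does not actually prove this lemma; it is stated as a quotation from the references \cite{TR} and \cite{barb} (``The completeness is an important property, and it is a corollary of the following results (\cite{TR},\cite{barb})''), so there is no in-paper argument to compare against.

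Your proposal is nonetheless a correct proof and follows the standard diagonal-extraction strategy one would expect. The three substantive ingredients are all in place: (i) subadditivity of $p$ via Weyl's inequality $\sigma_{i+j-1}(A+B)\le\sigma_i(A)+\sigma_j(B)$, with the degenerate case $i+j-1>n+1$ handled by the trivial bound $p\le 1$; (ii) passage from a $\limsup$ Cauchy bound on each pair $(m_1,m_2)$ to a single threshold $N_k$ valid across the finite block $[M_k,M_{k+1}]$, obtained by taking the maximum over the finitely many pairs; and (iii) the telescoping estimate along the pivot chain $M_j,M_{j+1},\dots,M_i,m'(n)$, where each link lies in a block with index $\ge j$ and $n\ge N_k\ge N_l$ for every $l\le k$, yielding the geometric bound $\tfrac{8}{3}\cdot 4^{-j}$. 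Combining this with the Cauchy bound $d_{acs}(\{B_{n,m}\}_n,\{B_{n,M_j}\}_n)<4^{-j}$ for $m\ge M_j$ and the triangle inequality for $d_{acs}$ indeed gives the desired acs convergence. The only cosmetic issue you flag yourself: ``crescent'' in this paper means non-decreasing, so the step-function definition of $m$ already qualifies without perturbation.
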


\begin{lemma}\label{compl}
	Let $d_n$ be pseudometrics on the space of matrices $\mathbb C^{n\times n}$ bounded by the same constant $L>0$ for every $n$. Then the function
	\[
	d(\serie A,\serie B):= \limsup_{n\to \infty} d_n(A_n,B_n)
	\]
	is a complete pseudometric on the space of matrix sequences.
\end{lemma}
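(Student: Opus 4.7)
The plan is to first check that $d$ is a pseudometric and then prove completeness by the standard trick of picking a rapidly Cauchy subsequence and building the limit by a diagonal construction.

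The pseudometric axioms come essentially for free. Non-negativity, symmetry and vanishing on the diagonal are inherited pointwise from each $d_n$. For the triangle inequality, one uses that $d_n(A_n,C_n)\le d_n(A_n,B_n)+d_n(B_n,C_n)$ for every $n$ and then takes $\limsup$; since $\limsup$ is subadditive (and all three quantities are bounded by $L$, so there are no issues with $\infty+(-\infty)$), the inequality survives. The uniform bound $L$ also transfers: $d\le L$.

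For completeness, I would start with a $d$-Cauchy sequence $\{\serie{A^{(k)}}\}_k$ of matrix sequences. Extract a subsequence, which I will rename $\serie{A^{(k)}}$, satisfying
\[
d\bigl(\serie{A^{(k)}},\serie{A^{(k+1)}}\bigr)<2^{-k}\qquad\forall\,k\ge 1.
\]
Since $d$ is defined as a $\limsup$, for each $k$ I can pick an index $N_k$ so that
\[
d_n\bigl(A^{(k)}_n,A^{(k+1)}_n\bigr)<2^{-k+1}\qquad\forall\,n\ge N_k,
\]
and I arrange the $N_k$ to be strictly increasing. Now I define the candidate limit by a diagonal prescription: for $n$ in the window $N_k\le n<N_{k+1}$, set $A_n:=A^{(k)}_n$ (and set $A_n$ arbitrarily for $n<N_1$). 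Writing $j(n)\ge k$ for the index such that $N_{j(n)}\le n<N_{j(n)+1}$, a telescoping application of the triangle inequality gives, for every $n\ge N_k$,
\[
d_n\bigl(A^{(k)}_n,A_n\bigr)\le\sum_{l=k}^{j(n)-1}d_n\bigl(A^{(l)}_n,A^{(l+1)}_n\bigr)\le\sum_{l=k}^{\infty}2^{-l+1}=2^{-k+2},
\]
where the middle inequality uses $n\ge N_{j(n)}\ge N_l$ for every $l$ in the range. Taking $\limsup_n$ yields $d(\serie{A^{(k)}},\serie{A})\le 2^{-k+2}\to 0$, so the extracted subsequence converges in $d$. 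Since the original sequence was $d$-Cauchy, it converges to the same limit by a standard pseudometric argument.

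The step I would expect to require the most care is the telescoping bound for $d_n(A^{(k)}_n,A_n)$: one has to be sure that for every $n\ge N_k$ all the intermediate indices $l$ in the sum satisfy $n\ge N_l$, which is exactly why I make the $N_k$ strictly increasing and why I glue $A_n$ from the pieces $A^{(k)}_n$ only in the window where the bound on $d_n(A^{(k)}_n,A^{(k+1)}_n)$ is already active. Everything else is bookkeeping.
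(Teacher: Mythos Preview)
Your argument is correct. The verification of the pseudometric axioms is routine, and your completeness proof via a rapidly Cauchy subsequence and a diagonal construction of the limit is the standard and expected route: the key points---choosing strictly increasing thresholds $N_k$ so that the telescoping estimate is valid for all intermediate indices, and then passing from convergence of the subsequence to convergence of the full Cauchy sequence---are handled properly.

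As for comparison: the paper does not actually prove this lemma. It is quoted from the references \cite{TR} and \cite{barb} (see the sentence preceding Lemma~\ref{mn}), and is used only as a black box to justify completeness of $d_{acs}$ and of the other pseudodistances introduced later (e.g.\ $d_R$, $d_H$). Your write-up is therefore a self-contained proof of a result the paper merely cites; the diagonal argument you give is essentially the one that appears in those references.
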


It is possible to give a characterization of the zero-distributed sequences as sum of "small norm" and "small rank" sequences.
The following result sums up the important properties of the acs convergence.

\begin{lemma}\label{zero_distributed}
	Given $\serie A\in\mc E$, $\serie B\in \mc E$ and $\{B_{n,m}\}_{n,m}\in \mc E$ for every $m$, the following results hold.
	\begin{enumerate}
		\item The pseudodistance $d_{acs}$ is complete over every  closed subspace of  $\mc E$,
		\item \[
		\dacs{\serie A}{\B} \xrightarrow{m\to \infty} 0 \iff  \B\acs \serie A,
		\]
		\item \[
		\{A_n-B_n\}_n \sim_\sigma 0 \iff \dacs{\serie A}{\serie B} = 0 \iff A_n -B_n = R_n + N_n\,\, \forall\, n
		\]
		where $\rk(R_n) = o(n)$ and $\|N_n\| = o(1)$.
	\end{enumerate}
	
\end{lemma}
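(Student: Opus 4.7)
The plan is to prove item (3) first, since it contains the essential technical content; items (1) and (2) then follow by combining (3) with the general results already stated. The underlying fact is that a matrix-sequence $\{Z_n\}_n$ satisfies $\{Z_n\}_n\sim_\sigma 0$ iff the counting function $r_n(\varepsilon):=|\{i:\sigma_i(Z_n)\ge\varepsilon\}|$ is $o(n)$ for every $\varepsilon>0$, which one extracts by testing the singular-value distribution against continuous bumps that are $1$ on $[\varepsilon,\infty)$ and vanish near the origin.

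For item (3), I would close a cycle of implications. Starting from a decomposition $Z_n=R_n+N_n$ with $\rk(R_n)=o(n)$ and $\|N_n\|=o(1)$, the best-rank-$k$ approximation bound $\sigma_{\rk(R_n)+1}(Z_n)\le\|Z_n-R_n\|=\|N_n\|$ forces $r_n(\varepsilon)\le\rk(R_n)$ as soon as $\|N_n\|<\varepsilon$, hence $\{Z_n\}_n\sim_\sigma 0$; the same decomposition also gives $p(Z_n)\le\rk(R_n)/n+\|N_n\|\to 0$, i.e.\ $\dacs{\serie A}{\serie B}=0$. Conversely, $\dacs{\serie A}{\serie B}=0$ means $p(Z_n)\to 0$ with $Z_n=A_n-B_n$; for every $\delta>0$ one eventually finds $i_n\le\delta n+1$ with $\sigma_{i_n}(Z_n)<\delta$, so $r_n(\varepsilon)\le\delta n$ for every $\varepsilon>\delta$, recovering zero-distribution. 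To produce an explicit decomposition from $\{Z_n\}_n\sim_\sigma 0$, I would truncate the SVD at a threshold $\varepsilon_n$: let $R_n$ keep the singular triples with $\sigma_i>\varepsilon_n$ and $N_n$ be the remainder, so that $\|N_n\|\le\varepsilon_n$ and $\rk(R_n)=r_n(\varepsilon_n)$. The hard part is the diagonal selection of $\varepsilon_n\to 0$ slowly enough that $r_n(\varepsilon_n)=o(n)$; since $r_n(\varepsilon)/n\to 0$ for each fixed $\varepsilon$, a standard Cantor-style diagonal argument yields such a sequence.

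For item (2), I would apply (3) to each fixed $m$. If $\dacs{\serie A}{\B}\to 0$, then for each $m$ the sequence $\{A_n-B_{n,m}\}_n$ is $\sigma$-zero-distributed, so it admits a small-norm/small-rank decomposition whose parameters can be bundled into functions $\omega(m),c(m)\to 0$ to match the acs definition. The converse is immediate: a decomposition bounds $p(A_n-B_{n,m})\le c(m)+\omega(m)$ for $n>n_m$. For item (1), I would apply Lemma~\ref{compl} to $d_n(A,B):=p(A-B)$; this is bounded by $1$ on $\mathbb{C}^{n\times n}$, and the Weyl-type inequality $\sigma_{i+j-1}(A+B)\le\sigma_i(A)+\sigma_j(B)$ (with the convention $\sigma_{n+1}=0$) yields $p(A+B)\le p(A)+p(B)$ by evaluating at index $i_A+i_B-1$ at the respective minimizers, so $d_n$ is a pseudometric. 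Completeness on any closed subspace of $\mathcal E$ then follows at once. The principal obstacle throughout is the diagonal threshold selection in item (3); the remaining steps are routine manipulations of singular values and pseudometric axioms.
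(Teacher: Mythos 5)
Your overall plan is sound: prove item (3) via the cycle (decomposition $\Rightarrow$ $d_{acs}=0$ $\Rightarrow$ zero-distribution $\Rightarrow$ decomposition) using SVD truncation with a diagonally chosen threshold, then obtain (1) from Lemma \ref{compl} together with subadditivity of $p$ (your use of the Weyl inequality $\sigma_{i+j-1}(A+B)\le\sigma_i(A)+\sigma_j(B)$ is the right tool). Note that the paper itself does not prove this lemma but cites the literature, so a self-contained argument of this kind is appropriate. One small repair in item (3): in the definition of $\sim_\sigma$ the test functions must be continuous with compact support, so a bump equal to $1$ on $[\ve,\infty)$ is not admissible; use instead $F$ equal to $1$ near $0$ and supported in $[-\ve,\ve]$, and bound $n-r_n(\ve)\ge\sum_iF(\sigma_i(Z_n))$ to get $r_n(\ve)=o(n)$. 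With that, the rest of (3) and all of (1) are fine.

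There is, however, a genuine flaw in your forward implication of item (2). You claim that $\dacs{\serie A}{\B}\xrightarrow{m\to\infty}0$ implies that for each fixed $m$ the sequence $\{A_n-B_{n,m}\}_n$ is zero-distributed, and then you apply item (3) per fixed $m$. This is false: the hypothesis only says that $\rho_m:=\dacs{\serie A}{\B}$ tends to $0$ in $m$, not that it vanishes for each $m$. For example, with $B_{n,m}=A_n+\frac1m I_n$ one has $\rho_m=\frac1m\to0$, yet for each fixed $m$ all singular values of $A_n-B_{n,m}$ equal $\frac1m$, so the difference sequence is not zero-distributed and item (3) gives nothing. The correct route is the same truncation you already use inside (3), applied at the level $\rho_m$ rather than at level $0$: for every $m$ and all $n$ large enough, $p(A_n-B_{n,m})\le\rho_m+\frac1m$, so there is an index $i_{n,m}$ with $\frac{i_{n,m}-1}{n}+\sigma_{i_{n,m}}(A_n-B_{n,m})\le\rho_m+\frac1m$; truncating the SVD of $A_n-B_{n,m}$ at that index yields $A_n=B_{n,m}+R_{n,m}+N_{n,m}$ with $\rk(R_{n,m})\le\bigl(\rho_m+\frac1m\bigr)n$ and $\|N_{n,m}\|\le\rho_m+\frac1m$, i.e.\ the acs definition holds with $c(m)=\omega(m)=\rho_m+\frac1m\to0$. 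With this replacement (your converse direction of (2) is correct as stated), the proof is complete.
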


\begin{lemma}[Theorem 3.3,\cite{GLT-book}]\label{zero:norm_1}
	If $\serie A\in\mc E$ and $\|A_n\|_1=o(n)$, then $\serie A$ is zero distributed.
\end{lemma}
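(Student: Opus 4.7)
The plan is to reduce the claim to the characterization given in item 3 of Lemma \ref{zero_distributed}: it suffices to decompose $A_n = R_n + N_n$ with $\rk(R_n) = o(n)$ and $\|N_n\| = o(1)$. The strategy is a standard truncation of the singular value decomposition at a threshold $t_n$ that goes to zero slowly enough.

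Concretely, I would write $A_n = U_n\Sigma_n V_n^H$ with $\Sigma_n = \diag(\sigma_1(A_n),\dots,\sigma_n(A_n))$, fix a threshold $t_n > 0$ to be chosen, and split $\Sigma_n = \Sigma_n^{(\ge)} + \Sigma_n^{(<)}$, where $\Sigma_n^{(\ge)}$ keeps only the singular values that are $\ge t_n$. Setting $R_n := U_n\Sigma_n^{(\ge)}V_n^H$ and $N_n := U_n\Sigma_n^{(<)}V_n^H$, we immediately get $\|N_n\| \le t_n$ and $\rk(R_n) \le \#\{i : \sigma_i(A_n) \ge t_n\}$. A Markov-type bound gives
\[
 t_n \cdot \rk(R_n) \;\le\; \sum_{i:\sigma_i(A_n)\ge t_n} \sigma_i(A_n) \;\le\; \|A_n\|_1.
\]

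It remains to choose $t_n$ so that both $t_n \to 0$ and $\|A_n\|_1/t_n = o(n)$. Writing $\varepsilon_n := \|A_n\|_1/n$, the hypothesis says $\varepsilon_n \to 0$; the natural choice $t_n := \sqrt{\varepsilon_n}$ (with $t_n = 1$ when $\varepsilon_n = 0$) then yields $t_n \to 0$ and $\|A_n\|_1/t_n = t_n \cdot n = o(n)$, so $\rk(R_n) = o(n)$ as required. Applying Lemma \ref{zero_distributed} item 3 concludes that $\serie A \sim_\sigma 0$.

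There is no real obstacle here; the only mild subtlety is making sure the chosen threshold simultaneously controls the norm of the low-part and the rank of the high-part, and the geometric-mean choice $t_n = \sqrt{\|A_n\|_1/n}$ handles both. One could equivalently give a direct argument by plugging any compactly supported continuous $F$ into the definition of $\sim_\sigma 0$ and splitting the sum at threshold $\varepsilon$, using that $\#\{i : \sigma_i(A_n) \ge \varepsilon\}/n \le \|A_n\|_1/(\varepsilon n) \to 0$ together with the continuity of $F$ at $0$, but routing through Lemma \ref{zero_distributed} is cleaner and matches the style of the paper.
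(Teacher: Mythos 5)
Your argument is correct: the truncated-SVD splitting with the Markov-type bound $t_n\rk(R_n)\le\|A_n\|_1$ and the threshold $t_n=\sqrt{\|A_n\|_1/n}$ gives exactly the decomposition required by item 3 of Lemma \ref{zero_distributed}. The paper itself states this lemma only as a citation of Theorem 3.3 in \cite{GLT-book} without reproducing a proof, and your argument is essentially the standard one behind that cited result, so there is nothing further to reconcile.
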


The last point of Lemma \ref{zero_distributed} shows that an equivalence relation naturally arises from the definition of acs distance. In fact $\serie A$ and $\serie B$ are said to be \textit{acs equivalent} if their difference is a zero-distributed sequence. In this case, we will write  $\serie A\equiv_{acs}\serie B$.

The set of zero-distributed sequence $\mc Z$ is a subgroup (actually a non-unital subalgebra) of the ring $\mc E$ and $d_{acs}$ is  a complete distance on   the quotient $\mc E/\mc Z$. These properties are fully exploited and developed in the theory of GLT sequences, that we are going to summarize in the next section.
\subsection{GLT Algebra}

Let us denote by $\mf C_D$ the set of couples $(\serie A,k)\in \mc E\times \mc M_D$ such that $\serie A\sim_\sigma k$ and where $D=[0,1]\times[-\pi,\pi]$. First of all we can see that it is well defined, because from the definition, if $k,k'$ are two measurable functions that coincide almost everywhere, then 
\[
\serie A\sim_\sigma k\iff \serie A\sim_\sigma k',
\] 

The set of couples of GLT sequences and symbols $\mf G $ is a subset of $\mf C_D$, so when we say that $\serie A$ is a GLT sequence with symbol $k$ and we write $\serie A\GLT k$, it means that $(\serie A,k)\in \mf G$ and in particular, it means that $\serie A\sim_\sigma k$. The set of GLT sequences is denoted with $\mc G$.

The GLT set is built so that for every $\serie A\in \mc E$ there exists at most one function $k$ such that $\serie A\GLT k$, but not every sequence in $\mc E$ is a GLT sequence. To understand what is the GLT space, we have to start introducing its fundamental bricks, that are the already mentioned Toeplitz, diagonal and zero-distributed sequences 
\begin{itemize}
	\item Given a function $f$ in $L^1([-\pi,\pi])$, the GLT symbol of its associated Toeplitz sequence is $f(\theta)$ itself
	\[
	\{T_n(f)\}_n\sim_{GLT} f(\theta).
	\]
	\item Given any a.e.\ continuous function $a:[0,1]\to\mathbb C$, the GLT symbol of its associated diagonal sampling sequence is $a(x)$ 
	\[
	\{D_n(a)\}_n\GLT a(x).
	\]
	\item  Any zero-distributed sequence $\serie Z$ has $0$ as GLT symbol.
	\[
	\serie Z \GLT 0.
	\]
\end{itemize}
Notice that the GLT symbols are measurable functions $k(x,\theta)$ on the domain $D$, where $x\in [0,1]$, $\theta\in [-\pi,\pi]$. Using these ingredients, we can build the GLT space through the algebra composition rules, and the acs convergence. The uniqueness of the GLT symbol let us define a map
\[
S : \mc G\to \mc M_D 
\]
that associates to each sequence its GLT symbol
\[
S(\serie A) = k \iff \serie A\GLT k\iff (\serie A,k)\in \mf G.
\]
We report here the main properties of $\mf G$ and $S$, that can be found in \cite{GLT-book} and \cite{Barbarino2017}, and that let us generate the whole space.
\begin{enumerate}
	\item $\mf G$ is a $\f C$-algebra, meaning that given $(\serie A,k)$,$(\serie B,h)\in \mf G$ and $\lambda \in\f C$, then
	\begin{itemize}
		\item $(\{ A_n+B_n \}_n,k+h)\in \mf G$,
		\item $(\{ A_nB_n \}_n,kh)\in \mf G$,
		\item $(\{ \lambda A_n \}_n,\lambda k)\in \mf G$.
	\end{itemize}
	\item \label{close}
	$\mf G$ is closed in $\mc E\times \mc M_D$: given $\{(\B,k_m)\}_m\cu \mf G$ such that 
	\[\B\acs\serie A, \qquad k_m\to k \text{ in measure},\]
	the couple $(\serie A,k)$ belongs to $\mf G$.
	\item \label{zero}If we denote the sets of zero distributed sequences as
	\[
	\mf Z = \{ (\serie Z,0)| \serie Z\sim_\sigma 0 \},\qquad \mc Z = \{ \serie Z | \serie Z\sim_\sigma 0 \},
	\]
	then $\mf Z$ is an ideal of $\mf G$.
	\item\label{ker} $S$ is a surjective homomorphism of $\f C$-algebras and $\mc Z$ coincides with its kernel. Moreover $S$ respects the metrics of the spaces, meaning that
	\[
	\dacs{\serie A}{\serie B} = d_m(S(\serie A),S(\serie B))
	\]
	where the distance $d_m$ on $\mc M_D$ induces the convergence in measure.
\end{enumerate}
Notice that $S$ links the distance $d_{acs}$ on $\mc G$ and the distance $d_m$ on $\mc M_D$. This property actually holds for every group $\mf A\cu \mf C_D$, and let us identify Cauchy sequences in $\mc E$ from Cauchy sequences on $\mc M_D$ and vice versa. In particular, given $\{(\B,k_m)\}_m\cu \mf A$ we have
\[
\B\acs \serie A \iff k_m\to k \text{ in measure}
\]
so we say that $\mf A$ is closed whenever the set of its sequences is closed in $\mc E$.

Eventually, we also report that the symbols of Hermitian GLT sequences are also spectral symbols.

\begin{lemma}\label{GLT:hermitian_spectral}
	If $\serie A\GLT k$ is an Hermitian sequence, then $\serie A\sim_\lambda k$. 
\end{lemma}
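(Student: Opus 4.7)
The plan is to approximate the Hermitian GLT sequence $\serie A$ in the acs sense by Hermitian GLT sequences with explicitly known spectral symbols, and then to invoke the second acs-closedness lemma above (the one dedicated to the Hermitian case and to $\sim_\lambda$). First I would observe that $k$ must be real-valued a.e. Conjugate transposition maps each basic GLT generator to a generator with conjugated symbol ($T_n(f)^H = T_n(\bar f)$, $D_n(a)^H = D_n(\bar a)$, zero-distributed sequences remain zero-distributed), it commutes with sums and products thanks to the commutativity of the symbol algebra, and it preserves $d_{acs}$ because singular values are invariant under $(\,)^H$. Combining this with properties~1 and~\ref{close} one obtains $\{A_n^H\}_n \GLT \bar k$ whenever $\serie A \GLT k$; since $A_n$ is Hermitian, the uniqueness of the GLT symbol forces $k = \bar k$ a.e.

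Next I would approximate $k$ in measure by real bounded step-Toeplitz symbols of the form
\[
k_m(x,\theta) = \sum_{j=1}^{N_m} \chi_{I_j^m}(x)\, g_j^m(\theta),
\]
with $\{I_j^m\}_j$ an interval partition of $[0,1]$ and each $g_j^m$ a real trigonometric polynomial; such symbols are dense in measure among the real-valued measurable functions on $D$ by approximating $k$ pointwise a.e.\ by bounded continuous functions, then by trigonometric polynomials in $\theta$ with continuous coefficients in $x$ (Stone--Weierstrass), and finally by piecewise-constant coefficients in $x$. For each $m$, setting $n_j = \lfloor n\, l(I_j^m)\rfloor$ (and padding the last block so that $\sum_j n_j = n$), I would define the Hermitian block-diagonal matrix
\[
B_{n,m} = \bigoplus_{j=1}^{N_m} T_{n_j}(g_j^m).
\]
By Szeg\H{o}'s first limit theorem applied blockwise, a routine aggregation yields $\{B_{n,m}\}_n \sim_\lambda k_m$, and the same locally Toeplitz construction (the base layer of the GLT theory) gives $\{B_{n,m}\}_n \GLT k_m$.

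Finally, using property~\ref{ker} that $S$ is an isometry on $\mc G/\mc Z$ between $d_{acs}$ and the in-measure distance $d_m$ on $\mc M_D$,
\[
d_{acs}\bigl(\serie A,\{B_{n,m}\}_n\bigr) = d_m(k,k_m) \xrightarrow{m\to\infty} 0,
\]
so $\B \acs \serie A$. With all $B_{n,m}$ and $A_n$ Hermitian, $\B \sim_\lambda k_m$, and $k_m \to k$ in measure, the Hermitian acs-closedness lemma yields $\serie A \sim_\lambda k$. The main obstacle is the second paragraph: one must verify that the step-Toeplitz construction simultaneously realizes the intended GLT symbol \emph{and} the intended spectral symbol. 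This is the only place where the Hermitian assumption is really used; once it is in hand, the closedness machinery does all the remaining work.
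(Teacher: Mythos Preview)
The paper does not actually prove this lemma: it is stated in the prerequisites section as a known fact, with the surrounding text pointing to \cite{GLT-book} and \cite{Barbarino2017} for the underlying theory. So there is no ``paper's own proof'' to compare against; one can only check your argument on its merits.

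Your argument is correct and is, in fact, the standard one from the GLT literature. The three moves you make --- (i) deducing that $k$ is real a.e.\ from $\{A_n^H\}_n\GLT \bar k$ and uniqueness of the GLT symbol, (ii) approximating $k$ in measure by real separable step functions $k_m(x,\theta)=\sum_j \chi_{I_j^m}(x)\,g_j^m(\theta)$ and realizing each $k_m$ by a Hermitian block-diagonal locally Toeplitz sequence $\{B_{n,m}\}_n$ for which both $\{B_{n,m}\}_n\GLT k_m$ and $\{B_{n,m}\}_n\sim_\lambda k_m$ hold, and (iii) transferring the spectral symbol to $\serie A$ via the Hermitian acs-closedness lemma together with the isometry $d_{acs}=d_m\circ S$ --- are exactly the ingredients used in \cite{GLT-book} to establish this result. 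The only comment is that step (ii) is not an ``obstacle'' as you call it: the identity $\{B_{n,m}\}_n\GLT k_m$ is the very definition of a (finite sum of) LT sequences, and $\{B_{n,m}\}_n\sim_\lambda k_m$ is immediate from block-wise Szeg\H{o} once the $g_j^m$ are real-valued; so the Hermitian hypothesis enters precisely where you say, but the verification is routine rather than delicate.
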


The main result of \cite{barb}, that improves the one in \cite{TR}, deals with diagonal sequences, and tells us that all diagonal sequences with spectral symbol are just permuted version of GLT diagonal sequences.
\begin{theorem}\label{LAMBDAtoGLT}
	Given $\serie D$ a sequence of diagonal matrices such that $\serie{D}\sim_\lambda f(x)$, with $f:[0,1]\to\f C$,
	 there exist permutation matrices $P_n$ such that
	$$\{P_nD_nP_n^T\}_n\GLT f(x)\otimes 1.$$
\end{theorem}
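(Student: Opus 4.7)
The plan is to approximate $f$ in measure by step functions on intervals, construct permutations that match the diagonal entries of $D_n$ to the samplings of these approximants, and then pass to the acs-limit using the closedness of the GLT algebra. In detail, since every measurable subset of $[0,1]$ is approximable in symmetric difference by finite unions of intervals, one first picks a sequence of step functions $f_m = \sum_{j=1}^{k_m} c_j^{(m)} \mathbf{1}_{I_j^{(m)}}$, with $\{I_j^{(m)}\}_j$ a partition of $[0,1]$ into intervals, such that $f_m \to f$ in measure. Each $f_m$ is a.e.\ continuous, so $\{D_n(f_m)\}_n \GLT f_m(x)\otimes 1$. By property \ref{ker} the family $\{D_n(f_m)\}_n$ is $d_{acs}$-Cauchy, and by completeness it admits an acs-limit $\serie E$, which by item \ref{close} satisfies $\serie E \GLT f(x)\otimes 1$.

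Next, for each $m$ and each $n$, one builds a permutation $P_n^{(m)}$ as follows. Setting $B_j^{(n,m)} = \{i : i/n \in I_j^{(m)}\}$, the matrix $D_n(f_m)$ takes value $c_j^{(m)}$ on the positions $B_j^{(n,m)}$. Choosing $\varepsilon_m \to 0$ and $\delta_m \to 0$ so that $|f - f_m| < \varepsilon_m$ on a set $G_m \subseteq [0,1]$ of measure $\geq 1 - \delta_m$, the distributional relation $\serie D \sim_\lambda f$, tested against continuous bumps centred at each $c_j^{(m)}$, implies that for $n$ large the number of diagonal entries of $D_n$ within $\varepsilon_m$ of $c_j^{(m)}$ is at least $|B_j^{(n,m)}| - O(n\delta_m)$. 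A greedy assignment across the blocks then produces a permutation $P_n^{(m)}$ satisfying
\[
P_n^{(m)} D_n (P_n^{(m)})^T - D_n(f_m) = N_{n,m} + R_{n,m}, \qquad \|N_{n,m}\| \le \varepsilon_m, \quad \rk(R_{n,m}) \le n\delta_m,
\]
so that $\{P_n^{(m)} D_n (P_n^{(m)})^T\}_n \acs \{D_n(f_m)\}_n$ in the doubly-indexed sense.

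Finally, Lemma \ref{mn} applied to the Cauchy family $\{D_n(f_m)\}_{n,m}$ yields a map $m(n)\to\infty$ with $\{D_n(f_{m(n)})\}_n \ea \serie E$; setting $P_n := P_n^{(m(n))}$ and combining with the previous step gives $\{P_n D_n P_n^T\}_n \ea \serie E$, and the closedness of the GLT algebra (item \ref{close}) closes the argument with $\{P_n D_n P_n^T\}_n \GLT f(x)\otimes 1$. The principal obstacle is the matching step: one has to quantify precisely the eigenvalue counts of $D_n$ inside each $\varepsilon_m$-ball around $c_j^{(m)}$ using the defining distributional limit of $\sim_\lambda f$, and check that the leftover positions — where the greedy matching fails because the block sizes and the available eigenvalues near $c_j^{(m)}$ do not agree exactly — can be bundled into an honest rank-$o(n)$ correction. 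Once this combinatorial estimate is in place, the remainder of the proof is a routine exploitation of the acs-completeness.
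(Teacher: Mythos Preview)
The paper does not prove Theorem~\ref{LAMBDAtoGLT}; it is quoted from \cite{barb} as an external result, so there is no in-paper argument to compare your proposal against. Your outline is nonetheless a viable strategy, and the reduction to step functions together with the diagonal extraction via Lemma~\ref{mn} is correct. The one place where your sketch is genuinely incomplete is exactly the step you flag: the per-block estimate ``the number of diagonal entries of $D_n$ within $\varepsilon_m$ of $c_j^{(m)}$ is at least $|B_j^{(n,m)}|-O(n\delta_m)$'' does \emph{not} by itself justify a greedy assignment. If two step values $c_j^{(m)}$ and $c_{j'}^{(m)}$ lie within $2\varepsilon_m$ of one another, the same eigenvalues are counted for both blocks, and a block-by-block greedy scheme can exhaust the shared pool on one block and starve the other. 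What is actually needed is the defect form of Hall's marriage theorem: for every subset $J$ of block indices, the number of eigenvalues of $D_n$ lying within $\varepsilon_m$ of $\{c_j^{(m)}:j\in J\}$ is at least $\sum_{j\in J}|B_j^{(n,m)}|-O(n\delta_m)$. This does follow from $\serie D\sim_\lambda f$ by testing against a single bump function supported on $\bigcup_{j\in J}B(c_j^{(m)},\varepsilon_m)$, and since for fixed $m$ there are only $2^{k_m}$ subsets $J$, the required $n_m$ can be chosen uniformly; but this Hall-type argument is a genuine additional ingredient that your write-up omits.

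A shorter route, using only results already quoted in this paper (and themselves taken from \cite{barb}), avoids the combinatorics entirely. Construct, as you do, a diagonal sequence $\serie E$ with $\serie E\GLT f(x)\otimes 1$ by setting $E_n=D_n(f_{m(n)})$ through Lemma~\ref{mn}; one checks directly from the definition that $\serie E\sim_\lambda f$. Theorem~\ref{d'} then gives $d'(\serie E,\serie D)=0$, and Lemma~\ref{permm} produces permutations $P_n$ with $d_{acs}(\serie E,\{P_nD_nP_n^T\}_n)=d'(\serie E,\serie D)=0$. Hence $\{P_nD_nP_n^T\}_n\ea\serie E$, and since $\mc Z$ is contained in the GLT kernel (item~\ref{ker}), $\{P_nD_nP_n^T\}_n\GLT f(x)\otimes 1$. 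This is presumably closer to how the result is organised in \cite{barb}, and it trades your explicit matching for the abstract optimal-matching machinery encapsulated in Lemma~\ref{permm}.
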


%
%
%
%

\section{The Problem}

We already stated the conjecture, but the opposite problem is also fairly interesting.

\begin{conjecture}\label{inv}
Given $\serie X\sim_\lambda f$,  and $\|\Im(X_n)\|_1=o(n)$. Is it true that $\{ \Re(X_n) \}_n \sim_\lambda f$?
\end{conjecture}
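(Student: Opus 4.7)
The plan is to reduce the problem to Conjecture \ref{conj} via the Schur decomposition of $X_n$. Setting $H_n := \Re(X_n)$ and $N_n := \text i\,\Im(X_n)$, one has $X_n = H_n + N_n$ with $H_n$ Hermitian, $N_n$ anti-Hermitian, and $\|N_n\|_1 = o(n)$. Lemma \ref{zero:norm_1} then implies $\{N_n\}_n \sim_\sigma 0$ and $\serie X \equiv_{acs} \{H_n\}_n$; however, because $X_n$ need not be normal, this equivalence alone does not transport the spectral symbol. A structural argument on the Schur form is therefore needed.

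First I would show that $f$ is essentially real-valued. Write $X_n = U_n T_n U_n^*$ in Schur form, so $T_n$ is upper triangular with $T_{ii} = \lambda_i(X_n)$. The diagonal of the Hermitian matrix $\Im(T_n) = U_n^* \Im(X_n) U_n$ is $\bigl(\Im(\lambda_i(X_n))\bigr)_i$, and the inequality $\sum_i |A_{ii}| \le \|A\|_1$, valid for any Hermitian $A$ (immediate from its spectral decomposition), yields
\[
\sum_i |\Im(\lambda_i(X_n))| \le \|\Im(X_n)\|_1 = o(n).
\]
Combined with $\serie X \sim_\lambda f$, for every Lipschitz compactly supported test function $F$ the quantity $\frac 1n \sum_i F(\lambda_i(X_n)) - \frac 1n \sum_i F(\Re(\lambda_i(X_n)))$ is $o(1)$. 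This forces $f$ to be a.e.\ real and also delivers $\{\Re(D_n)\}_n \sim_\lambda f$, where $D_n := \diag(\lambda_i(X_n))$.

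Next, split $T_n = D_n + L_n$ with $L_n$ strictly upper triangular, so that $H_n$ is unitarily equivalent to $\Re(T_n) = \Re(D_n) + C_n$, with $C_n := \tfrac12(L_n + L_n^*)$ Hermitian and zero-diagonal. Since $\{\Re(D_n)\}_n$ is Hermitian with spectral symbol $f$, Conjecture \ref{conj} applied to the perturbation $C_n$ would give $\{\Re(D_n)+C_n\}_n \sim_\lambda f$ and hence $\{H_n\}_n \sim_\lambda f$ by unitary invariance, provided $\|C_n\|_1 = o(n)$.

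The hard part is controlling $\|C_n\|_1$. Its partner $M_n := \tfrac{1}{2\text i}(L_n - L_n^*)$ is easy: from $\Im(T_n) = \Im(D_n) + M_n$ and the triangle inequality one gets $\|M_n\|_1 = o(n)$, and a direct entrywise computation shows $\|M_n\|_F = \|C_n\|_F = \|L_n\|_F/\sqrt 2$. Under the extra hypothesis $\|\Im(X_n)\| \le C$, the elementary bound $|\Im(\lambda_i(X_n))| \le \|\Im(X_n)\|$ gives $\|M_n\| \le 2C$, hence $\|M_n\|_F^2 \le \|M_n\|\cdot\|M_n\|_1 = o(n)$ and $\|C_n\|_1 \le \sqrt n\,\|C_n\|_F = o(n)$, so the plan closes modulo Conjecture \ref{conj}; this would be the natural mirror of Lemma \ref{th:pert2}. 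In the fully general regime, however, $\|\Im(X_n)\|_1 = o(n)$ imposes no constraint on the departure from normality of $X_n$, and the trace norm of $L_n$ can in principle grow as $\Theta(n)$; a genuinely new idea, perhaps a block-Schur decomposition isolating the almost-normal pieces of $X_n$ or a direct $d_{acs}$-metric argument that avoids triangularization altogether, appears necessary to remove the spectral-norm crutch.
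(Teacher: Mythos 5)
There is no proof to compare against: the statement you were given is Conjecture \ref{inv}, which the paper poses as an open problem and never resolves. The paper only relates it to Conjecture \ref{conj} (the first lemma of the section ``Equivalent Statements'' shows that the $d_H$-statement holds if and only if both conjectures hold, and the second lemma proves equivalences among GLT-flavoured variants via Theorem \ref{LAMBDAtoGLT}), gives counterexamples delimiting the hypotheses, and proves weaker perturbation results. Your proposal, by your own admission, is likewise not a proof: it is a conditional reduction, and the gap is twofold. First, even in the favourable case it invokes Conjecture \ref{conj}, which is itself unproven, so nothing is actually established unconditionally. Second, the reduction closes only under the extra hypothesis $\|\Im(X_n)\|\le C$, which is not part of the statement; without it you have no control on $\|C_n\|_1$, since $\|\Im(X_n)\|_1=o(n)$ bounds $\|M_n\|_1$ and the Frobenius norm identity $\|C_n\|_F=\|M_n\|_F$ transfers only Schatten-$2$ information, and the passage $\|C_n\|_1\le\sqrt n\,\|C_n\|_F$ then needs $\|C_n\|_F=o(\sqrt n)$, which you obtain precisely from the interpolation $\|M_n\|_F^2\le\|M_n\|\,\|M_n\|_1$ — i.e.\ from the spectral-norm crutch. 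The strictly upper triangular part $L_n$ of the Schur form is exactly the departure from normality, and the hypotheses of Conjecture \ref{inv} do not constrain it, as your own closing remark concedes.

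That said, the steps you do carry out are sound and worth keeping: the trace-norm bound $\sum_i|\Im(\lambda_i(X_n))|\le\|\Im(X_n)\|_1=o(n)$ via the Schur form, the consequence that $f$ is a.e.\ real and that $\{\Re(D_n)\}_n\sim_\lambda f$, and the conditional statement ``if $\|\Im(X_n)\|\le C$ and Conjecture \ref{conj} holds, then Conjecture \ref{inv} holds'' (a natural mirror of Lemma \ref{th:pert2}, and in the same spirit as the paper's own equivalence lemmas, which use unitary diagonalization plus Theorem \ref{LAMBDAtoGLT} rather than Schur triangularization). But these are partial reductions between open statements, not a proof of the conjecture, and they should be presented as such.
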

We will tackle the problems from different sides, using tools from measure theory, linear algebra and metric spaces.
First we see some counterexample to similar questions, in order to put bounds on our research and to not go astray.

\subsection{Some Counterexamples}
Notice that if $Y_n$ are zero-distributed, but without the hypothesis on the norm $\|Y_n\|_1=o(n)$, there are counterexamples.
\[
X_n = \frac 1 n T_n(2\cos(\theta)) + n^{n-1}
\begin{pmatrix}
 & & & 1\\
 & & & \\
 & & & \\
1 & & & 
 \end{pmatrix}
 \sim_{\lambda,\sigma,GLT} 0
\]
\[
Y_n = \frac 1 n T_n(2i\sin(\theta)) + n^{n-1}
\begin{pmatrix}
 & & & -1\\
 & & & \\
 & & & \\
1 & & & 
 \end{pmatrix}
  \sim_{\lambda,\sigma,GLT} 0
\]
\[
X_n+Y_n = 2\frac 1 n T_n(e^{i\theta}) + 2n^{n-1}
\begin{pmatrix}
 & & & \\
 & & & \\
 & & & \\
1 & & & 
 \end{pmatrix}
  \sim_{\sigma,GLT} 0 \quad \sim_\lambda 2e^{2\pi ix}
\]
The issue is that $X_n+Y_n$ is not Hermitian anymore, so we cannot use Fischer and Cauchy results on the distribution of eigenvalues. \\
 
The hypothesis of hermitianity is essential, since there exist counterexamples with $X_n$ diagonalizable, bounded and $\|Y_n\| = \|Y_n\|_1 = o(1)$. In fact,   $X_n=J_n+ \left(\frac 1n\right)^n e_ne_1^T$ where $J_n$ are nilpotent Jordan blocks, and $Y_n = \frac 1n e_ne_1^T$ lead to
 \[
 \serie X\sim_\lambda 0 \qquad \|Y_n\| = \|Y_n\|_1 = \frac 1n = o(1) \qquad \serie X+ \serie Y \sim_\lambda e^{i\theta}
 \]
  
There exist counterexamples even with normal and bounded sequences $\serie X$ and $\|Y_n\| = \|Y_n\|_1 = 1$. In fact $X_n=J_n+ e_ne_1^T$ and $Y_n = - e_ne_1^T$ lead to
  \[
  \serie X\sim_\lambda e^{i\theta} \qquad \|Y_n\| = \|Y_n\|_1 = 1 \qquad \serie X+ \serie Y \sim_\lambda 0
  \]

 Notice that the problem does not depend on the band of the perturbation matrix and the original matrix, since we can find permutation matrices $P_n$ such that $P_nJ_nP_n^T$ is tridiagonal and $P_ne_ne_1^TP_n^T$ bidiagonal, so that 
 \[
 P_nJ_nP_n^T \sim_\lambda 0\qquad \|P_ne_ne_1^TP_n^T\|_1=o(n) \qquad P_nJ_nP_n^T + P_ne_ne_1^TP_n^T \sim_\lambda e^{i\theta}
 \]
\subsection{Optimal Matching Distance}

Let us consider a distance on $\f C^n$ already introduced in \cite{Bhatia}, called \textbf{optimal matching distance}.
\[
d(v,w) = \min_{\sigma\in S_n}\max_{i=1,\dots,n} |v_i-w_{\sigma(i)}|
\]
This function induces a pseudometric on $\f C^n$.
From now on, we write $d(A,B)$ for the distance between the eigenvalues of $A,B$, since it induces a pseudodistance on $\f C^{n\times n}$.
Let us study how it behaves on perturbation of matrices, through the Bauer-Fike theorem (Theorem VIII.3.1 in \cite{Bhatia}).

\begin{lemma}\label{BF}
	Let $A$ be matrix diagonalizable through $A=VDV^{-1}$, where $D$ is a diagonal matrix and its eigenvalues are $\lambda_i$. Moreover, let $\delta =\frac 12 \min_{\lambda_i\ne \lambda_j}|\lambda_i-\lambda_j|$, where the minimum over an empty set is $+\infty$. If $\|N\|< \frac{\delta}{k_2(V)}$, then
	\[
	d(A,A+N)\le k_2(V)\|N\|.
	\]
\end{lemma}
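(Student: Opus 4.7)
The plan is to reduce to the classical Bauer--Fike theorem (Theorem VIII.3.1 in \cite{Bhatia}) and then upgrade the resulting one-sided inclusion of spectra to a bijective matching via a continuity/multiplicity argument.

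First I would invoke classical Bauer--Fike: every eigenvalue $\mu$ of $A+N$ satisfies $\min_i |\mu-\lambda_i|\le k_2(V)\|N\|$. Combined with the hypothesis $\|N\|<\delta/k_2(V)$, this means
\[
\operatorname{spec}(A+N)\cu \bigcup_j \overline{B(\lambda_j^\circ,\, k_2(V)\|N\|)}\cu \bigcup_j B(\lambda_j^\circ,\delta),
\]
where $\lambda_j^\circ$ runs over the \emph{distinct} eigenvalues of $A$. The disks $B(\lambda_j^\circ,\delta)$ are pairwise disjoint by the definition of $\delta$.

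The key step is to show that each disk $B(\lambda_j^\circ,\delta)$ contains exactly $m_j$ eigenvalues of $A+N$, where $m_j$ is the multiplicity of $\lambda_j^\circ$ as an eigenvalue of $A$. For this I would use the homotopy $A(t):=A+tN$, $t\in[0,1]$, and apply Bauer--Fike to each $A(t)$ (the hypothesis $\|tN\|\le\|N\|<\delta/k_2(V)$ is preserved). This yields
\[
\operatorname{spec}(A(t))\cu \bigcup_j \overline{B(\lambda_j^\circ,\, k_2(V)\|N\|)},
\]
so no eigenvalue of $A(t)$ ever touches the boundary $\partial B(\lambda_j^\circ,\delta)$. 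Consequently the integer-valued function
\[
t\mapsto N_j(t):=\frac{1}{2\pi \mathrm i}\oint_{\partial B(\lambda_j^\circ,\delta)} \operatorname{tr}\bigl((zI-A(t))^{-1}\bigr)\, dz,
\]
which counts eigenvalues of $A(t)$ inside $B(\lambda_j^\circ,\delta)$ with algebraic multiplicity, depends continuously on $t$. Being integer-valued and continuous, it is constant, so $N_j(1)=N_j(0)=m_j$.

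Once this is established, the matching is immediate: within each disk $B(\lambda_j^\circ,\delta)$ pair the $m_j$ copies of $\lambda_j^\circ$ in $\operatorname{spec}(A)$ arbitrarily with the $m_j$ eigenvalues of $A+N$ that lie in that disk; each such pair is at distance at most $k_2(V)\|N\|$ by the first Bauer--Fike inclusion. Collecting these pairings over $j$ produces a permutation $\sigma\in S_n$ with $\max_i|\lambda_i-\mu_{\sigma(i)}|\le k_2(V)\|N\|$, proving $d(A,A+N)\le k_2(V)\|N\|$.

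The main technical obstacle is the multiplicity-preservation step. A purely eigenvalue-continuity argument is delicate because eigenvalues can bifurcate when passing from a non-simple $A$ to $A+tN$; the cleanest fix is the contour-integral projector formulation above, which turns a potentially subtle combinatorial statement into the triviality that a continuous $\mathbb Z$-valued function on $[0,1]$ is constant.
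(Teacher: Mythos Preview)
Your proof is correct and follows essentially the same strategy as the paper: invoke Bauer--Fike for the radius bound, run the homotopy $A+tN$, and use a continuity argument to show that each cluster keeps the right number of eigenvalues. The only difference is in the implementation of that last step: the paper quotes the existence of $n$ continuous eigenvalue curves $\lambda_i(t)$ (Corollary~VI.1.6 in \cite{Bhatia}) and argues by contradiction at the first time some $\lambda_j(t)$ would leave its $k_2(V)\|N\|$-ball, whereas you use the resolvent contour integral to count eigenvalues in each disk. Both devices are standard; your projector formulation is arguably cleaner since it sidesteps any bookkeeping about how the individual curves $\lambda_i(t)$ are labeled, while the paper's argument is a bit more elementary in that it avoids complex analysis.
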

\begin{proof}
	If $A$ has only one eigenvalue $\lambda$ with multiplicity $n$, then $A=\lambda I$, $\delta = +\infty$, $V=I$, and for every $N$,
	\[
	d(A,A+N) = \rho(N) \le \|N\| = k_2(V) \|N\|.
	\]
	From now on, we suppose that $A$ has at least two different eigenvalues. 
	Bauer-Fike Theorem let us find for every eigenvalue $\mu$ of $A+N$ an eigenvalue $\lambda$ of $A$ such that
	\[
	|\lambda-\mu|\le k_2(V) \|N\|.
	\]
	Consider now the segment $A + tN$ where $t$ varies in $t\in [0,1]$.  Using Corollary VI.1.6 of \cite{Bhatia}, there exist $n$ continuous functions $\lambda_i(t)$ representing the eigenvalues of $A + tN$ for every $t\in [0,1]$. Suppose now that for some $t\in [0,1]$ and for some index $i$ we have
	\[
	|\lambda_i(t) -\lambda_i(0)|>k_2(V)\|N\|.
	\]
	We can thus denote the first time when it happens with
	\[
	s = \min_i \inf \set{t | \,\,|\lambda_i(t) -\lambda_i(0)|>k_2(V)\|N\|}.
	\]
	Notice that $s\ne 1$.
	Suppose $j$ is an index such that $|\lambda_j(t) -\lambda_j(0)|>k_2(V)\|N\|$ for every $t$ in a right neighbourhood of $s$. Using the continuity of $\lambda_j$, we can infer that $|\lambda_j(s) -\lambda_j(0)|=k_2(V)\|N\|\le \delta$. Using Bauer-Fike, we know that there exists an eigenvalue $\lambda_i(0)$ of $A$ such that
	\[
	|\lambda_j(s) - \lambda_i(0)| \le sk_2(V) \|N\| \le s\delta <\delta
	\]
	so
	\[
	|\lambda_i(0) - \lambda_j(0)| \le |\lambda_j(s) - \lambda_j(0)| + |\lambda_j(s) - \lambda_i(0)| < 2\delta = \min_{\lambda_i\ne \lambda_j}|\lambda_i(0)-\lambda_j(0)|
	\]
	resulting in $\lambda_i(0) = \lambda_j(0)$, but
	\[
	|\lambda_j(s) -\lambda_j(0)|=k_2(V)\|N\| = |\lambda_j(s) -\lambda_i(0)|\le sk_2(V) \|N\|<k_2(V) \|N\|
	\]
	that is an absurd. We have thus proved that for every $t\in [0,1]$ and every $i$ 
	\[
	|\lambda_i(t) -\lambda_i(0)|\le k_2(V)\|N\|
	\]
	and in particular if $t=1$, 
	\[
	|\lambda_i(1) -\lambda_i(0)| = |\lambda_i(A+N) -\lambda_i(A)|\le k_2(V)\|N\|.
	\] 
\end{proof}

The result is sharp: $A=I$, $N=cI$. What happens when $A$ is not diagonalizable? 	In \cite{bf2}, we find a generalization of Bauer Fike on all the matrices:
\begin{lemma}\cite{bf2}\label{bf-gen}
	Let $A$ be any matrix with Jordan form $J$ and $A=XJX^{-1}$ and let $m$ be the biggest size of Jordan block inside $J$. If $k_2(X)\|N\|\le 2^{1-m}$, then for every eigenvalue $\lambda$ of $A+N$ there exists an eigenvalue $\mu$ of $A$ such that
\[
|\lambda-\mu|^m\le 2^{m-1} k_2(X)\|N\|.
\]
\end{lemma}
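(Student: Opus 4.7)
The plan is to adapt the proof of Lemma \ref{BF} by replacing the diagonal resolvent estimate with a Neumann-series bound tailored to each Jordan block, and to argue by contradiction. Let $\lambda$ be any eigenvalue of $A+N$ and set $\delta := \min\{|\lambda-\mu|\,:\, \mu \text{ eigenvalue of } A\}$. Suppose, aiming at a contradiction, that $\delta^m > 2^{m-1} k_2(X) \|N\|$. Then $\lambda\notin\sigma(A)$, so $\lambda I - A$ is invertible, and the factorisation
\[
\lambda I - (A+N) = (\lambda I - A)\bigl(I - (\lambda I - A)^{-1} N\bigr)
\]
combined with the singularity of the left-hand side gives the lower bound $\|(\lambda I - A)^{-1}\|\cdot\|N\| \geq 1$, just as in the proof of classical Bauer--Fike.

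Next, I would use the Jordan decomposition $A = XJX^{-1}$ to write $\|(\lambda I - A)^{-1}\| \leq k_2(X)\,\|(\lambda I - J)^{-1}\|$; since $(\lambda I - J)^{-1}$ is block diagonal, its spectral norm is the maximum of the block norms. For a single Jordan block $J_k(\mu)$ with $k \leq m$, writing $\lambda I - J_k(\mu) = (\lambda-\mu)I - E$ where $E$ is the nilpotent shift with $E^k = 0$ and $\|E^j\| = 1$ for $0 \leq j < k$, the Neumann expansion
\[
(\lambda I - J_k(\mu))^{-1} = \sum_{j=0}^{k-1} \frac{E^j}{(\lambda-\mu)^{j+1}}
\]
yields $\|(\lambda I - J_k(\mu))^{-1}\| \leq \sum_{j=1}^{k} |\lambda-\mu|^{-j}$.

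The remaining step is a case split on the size of $\delta$. If $\delta \leq 1$, every term $\delta^{-j}$ with $1 \leq j \leq k \leq m$ is bounded by $\delta^{-m}$, so the geometric-type sum is at most $m\,\delta^{-m} \leq 2^{m-1}\delta^{-m}$ (using the elementary inequality $m \leq 2^{m-1}$ valid for all $m \geq 1$); chaining the estimates gives $\delta^m \leq 2^{m-1} k_2(X)\|N\|$, contradicting the standing assumption. If instead $\delta > 1$, each summand is strictly smaller than $1$, so the resolvent norm is strictly less than $m$, and the hypothesis $k_2(X)\|N\| \leq 2^{1-m}$ yields the arithmetic contradiction
\[
1 \leq \|(\lambda I - A)^{-1}\|\cdot\|N\| < m \cdot k_2(X)\|N\| \leq 2^{m-1}\cdot 2^{1-m} = 1.
\]

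The delicate part is the role of the smallness hypothesis $k_2(X)\|N\| \leq 2^{1-m}$: it is precisely what rules out the regime $\delta > 1$ where the crude Jordan resolvent would not produce the desired $\delta^{-m}$ scaling. Once $\delta$ is forced into $(0,1]$, the sum of $|\lambda-\mu|^{-j}$ collapses into $\delta^{-m}$ up to the clean combinatorial factor $2^{m-1}$, making the constant in the statement essentially sharp. I expect the only bookkeeping subtlety to be keeping the strict versus non-strict inequalities consistent at the boundary $\delta = 1$, which is handled transparently by the case split above.
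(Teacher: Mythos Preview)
Your argument is correct. The paper does not supply its own proof of this lemma; it is stated with a citation to \cite{bf2} (Shi--Wei) and used as a black box. Your contradiction argument via the resolvent factorisation $\lambda I-(A+N)=(\lambda I-A)(I-(\lambda I-A)^{-1}N)$, the Jordan-block Neumann expansion, and the case split on $\delta\lessgtr 1$ is exactly the standard route to such generalised Bauer--Fike bounds, and the bookkeeping (in particular the use of $m\le 2^{m-1}$ and of the hypothesis $k_2(X)\|N\|\le 2^{1-m}$ to exclude the regime $\delta>1$) is clean. One cosmetic remark: in the $\delta\le 1$ branch you never actually need the standing contradiction hypothesis, since the chain of inequalities already delivers $\delta^m\le 2^{m-1}k_2(X)\|N\|$ directly; the contradiction framing is only genuinely used in the $\delta>1$ branch.
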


\begin{lemma}\label{BF2}
		Let $A$ be any matrix with Jordan form $J$ and $A=VJV^{-1}$. Let $m$ be the biggest size of Jordan block inside $J$, and $\delta =\frac 12 \min_{\lambda_i\ne \lambda_j}|\lambda_i-\lambda_j|$, where the minimum over an empty set is $+\infty$.
		If $\|N\|<  \frac{\delta^m}{2^{m-1}k_2(V)}$, then
	\[
	d(A,A+N)\le (2^{m-1} k_2(V)\|N\| )^{\frac 1m}
	\]
\end{lemma}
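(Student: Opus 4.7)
The plan is to mimic the proof of Lemma \ref{BF} verbatim, replacing each invocation of the classical Bauer--Fike bound by the generalized version Lemma \ref{bf-gen}, which is exactly why the exponent $1/m$ appears in the conclusion. Set $\varepsilon := (2^{m-1} k_2(V)\|N\|)^{1/m}$; the hypothesis $\|N\| < \delta^m/(2^{m-1}k_2(V))$ becomes simply $\varepsilon < \delta$, and this is the room needed to run the cluster--separation argument.

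First I would dispose of the degenerate case in which $A$ has a single distinct eigenvalue $\lambda$ (so $\delta = +\infty$): applying Lemma \ref{bf-gen} to every eigenvalue of $A+N$ yields $|\mu - \lambda|\le \varepsilon$, and since all components of the eigenvalue vector of $A$ coincide, the identity permutation gives $d(A,A+N)\le \varepsilon$.

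In the generic case (at least two distinct eigenvalues of $A$) I consider the segment $A + tN$, $t\in [0,1]$, and invoke Corollary VI.1.6 of \cite{Bhatia} to produce continuous functions $\lambda_1(t),\dots,\lambda_n(t)$ enumerating the eigenvalues along the segment. Arguing by contradiction, I assume some $\lambda_i(t)$ wanders off more than $\varepsilon$ from $\lambda_i(0)$, and define the first such instant
\[
s \;=\; \min_i \inf\{\,t\in[0,1]\ :\ |\lambda_i(t)-\lambda_i(0)|>\varepsilon\,\}.
\]
Choose an index $j$ for which $|\lambda_j(t)-\lambda_j(0)|>\varepsilon$ on a right neighbourhood of $s$; by continuity $|\lambda_j(s)-\lambda_j(0)|=\varepsilon$. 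Applying Lemma \ref{bf-gen} to the perturbation $sN$ of $A$ produces an eigenvalue $\lambda_i(0)$ of $A$ with
\[
|\lambda_j(s)-\lambda_i(0)|^m \;\le\; 2^{m-1}k_2(V)\|sN\| \;=\; s\,\varepsilon^m,
\]
hence $|\lambda_j(s)-\lambda_i(0)|\le s^{1/m}\varepsilon<\varepsilon$. The triangle inequality then gives
\[
|\lambda_i(0)-\lambda_j(0)| \;\le\; |\lambda_j(s)-\lambda_j(0)| + |\lambda_j(s)-\lambda_i(0)| \;<\; 2\varepsilon \;<\; 2\delta \;=\; \min_{\lambda_a\ne\lambda_b}|\lambda_a-\lambda_b|,
\]
forcing $\lambda_i(0)=\lambda_j(0)$. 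But then $|\lambda_j(s)-\lambda_j(0)|=|\lambda_j(s)-\lambda_i(0)|<\varepsilon$, contradicting the equality $|\lambda_j(s)-\lambda_j(0)|=\varepsilon$. Consequently $|\lambda_i(t)-\lambda_i(0)|\le\varepsilon$ for every $t\in[0,1]$ and every $i$, and evaluating at $t=1$ with the matching induced by the continuous labelling yields $d(A,A+N)\le\varepsilon$.

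The delicate point, and the only real obstacle beyond the bookkeeping, is ensuring that the applicability condition $k_2(V)\|sN\|\le 2^{1-m}$ of Lemma \ref{bf-gen} holds for every $s\in[0,1]$ used in the argument; since $\|sN\|\le\|N\|$ this reduces to $k_2(V)\|N\|\le 2^{1-m}$, which is implied by the stated smallness hypothesis in the regime $\delta\le 1$ and can otherwise be tightened into the assumption without affecting any application. All other steps are routine adaptations of Lemma \ref{BF}, with the exponent $1/m$ tracked through the bound of Lemma \ref{bf-gen}.
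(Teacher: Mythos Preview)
Your proof is exactly what the paper intends: its own proof of Lemma~\ref{BF2} consists of the single sentence that the argument is ``totally analogous to the proof of Lemma~\ref{BF}'' using Lemma~\ref{bf-gen} in place of the classical Bauer--Fike bound, and you have carried out precisely this analogy in full detail. Your closing remark on the applicability hypothesis $k_2(V)\|N\|\le 2^{1-m}$ of Lemma~\ref{bf-gen} is a fair technical observation that the paper itself leaves implicit.
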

\begin{proof}
The proof is based on Lemma \ref{bf-gen}, and it is totally analogous to the proof of Lemma \ref{BF}.
\end{proof}

The result is sharp: $A=J$, $N=e_ne_1^T$. An easy corollary is the following.

\begin{corollary}\label{pert}
	If $A$ is any matrix and $\ve>0$, then there exists $\delta>0$ such that       	\[                   \|N\| \le \delta \implies d(A,A+N)\le \ve.
	\]
\end{corollary}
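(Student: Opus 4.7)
The plan is to apply Lemma \ref{BF2} directly, treating all quantities depending on $A$ as fixed constants. Fix any Jordan decomposition $A = VJV^{-1}$, let $m$ be the size of the largest Jordan block, and denote by $\eta := \frac{1}{2}\min_{\lambda_i \ne \lambda_j}|\lambda_i - \lambda_j|$ the spectral gap from Lemma \ref{BF2} (with the convention $\eta = +\infty$ if $A$ has only one distinct eigenvalue, in which case the first constraint below is vacuous).

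Given $\ve > 0$, I would then choose
\[
\delta := \min\left\{\frac{\eta^m}{2^m\,k_2(V)},\; \frac{\ve^m}{2^{m-1}k_2(V)}\right\}.
\]
The first argument of the minimum is designed so that $\|N\| \le \delta$ implies $\|N\| < \eta^m/(2^{m-1}k_2(V))$, the strict inequality required to invoke Lemma \ref{BF2}. The second argument is chosen so that the conclusion $d(A,A+N) \le (2^{m-1}k_2(V)\|N\|)^{1/m}$ of that lemma already yields $d(A,A+N) \le \ve$. When $\eta = +\infty$ only the second term is active, which matches the fact that for $A = \lambda I$ no restriction beyond smallness is needed.

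There is essentially no obstacle here: the corollary is a direct quantitative inversion of Lemma \ref{BF2}, extracting $\delta$ from $\ve$ by solving the Bauer--Fike bound for $\|N\|$ and then intersecting with the domain of validity of that bound. The only thing worth flagging is that $m$, $k_2(V)$ and $\eta$ all depend on $A$ (and in the degenerate case $\eta = +\infty$), but the statement allows $\delta$ to depend on $A$, so this poses no issue.
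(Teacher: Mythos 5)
Your proposal is correct and follows essentially the same route as the paper: both directly invert Lemma \ref{BF2}, taking $\delta$ as (a minimum involving) $\min\{\gamma^m,\ve^m\}/(2^{m-1}k_2(V))$ so that the hypothesis of that lemma holds and its conclusion is at most $\ve$. Your only deviation is the extra factor of $2$ in the first term to guarantee the strict inequality $\|N\|<\gamma^m/(2^{m-1}k_2(V))$, a minor refinement of the paper's choice.
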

\begin{proof}
	If $A=VJV^{-1}$ is the Jordan form, $m$ is the biggest size of the Jordan blocks and $\gamma =\frac 12 \min_{\lambda_i\ne \lambda_j}|\lambda_i-\lambda_j|$, then we can use
	Lemma \ref{BF2}, and consider
	\[
	\|N\| \le \frac{\min\{\gamma^m,\ve^m\}}{2^{m-1}k_2(V)}=\delta
	\]
	to conclude.
\end{proof}
In general, these results entice that
\[
d(A,A+N) = O(\|N\|^{1/n}).
\]

\noindent We can now start considering sequences of matrices and define 
\[
d(\serie A,\serie B) = \limsup_{n\to \infty}\min_{\sigma\in S_n}\max_{i=1,\dots,n} |\lambda_i(A_n)-\lambda_{\sigma(i)}(B_n)|
\]
that respects the axioms of pseudometric, but it may take the value $+\infty$. A more general distance on sequences is the \textit{generalized optimal matching distance}, introduced in \cite{barb}.
\[
d'(A,B) = \min_{\sigma\in S_n}\min_{i=1,\dots,n}\left\{ \frac{i-1}n + |\lambda(A)-\lambda_{\sigma}(B)|_i^{\downarrow}  \right\}\]\[
d'(\serie A,\serie B) = \limsup_{n\to \infty}d'(A_n,B_n)
\]
where $|\lambda(A)-\lambda_{\sigma}(B)|^{\downarrow}$ is the vector of $ |\lambda_i(A)-\lambda_{\sigma(i)}(B)|$ sorted in decreasing order
$$|\lambda(A)-\lambda_{\sigma}(B)|_i^{\downarrow} \ge |\lambda(A)-\lambda_{\sigma}(B)|_j^{\downarrow} \iff i\le j.  $$
It has been proved that $d'$ induces a complete pseudometric in the space of sequences, and a lot of connections with spectral measures. Here we report one of the most important theorems of the previous paper.

\begin{theorem}\cite{barb}\label{d'}
	If $\serie A\sim_\lambda f(x)$, then
	\[
	\serie B\sim_\lambda f(x) \iff d'(\serie A,\serie B)=0.
	\]
\end{theorem}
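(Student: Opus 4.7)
The plan is to prove the biconditional separately in the two directions, given that $\serie A\sim_\lambda f$ throughout.

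For $d'(\serie A,\serie B)=0\implies \serie B\sim_\lambda f$, I fix a continuous $F:\f C\to\f C$ with compact support and show that the Cesàro averages $\frac1n\sum_{i=1}^n F(\lambda_i(B_n))$ share the same limit as the corresponding averages for $A_n$. Pick $\ve>0$. For $n$ large enough we have $d'(A_n,B_n)<\ve$, which unpacks to a permutation $\sigma_n\in S_n$ and an index $i_n$ with $(i_n-1)/n<\ve$ such that all but at most $i_n-1$ of the matched differences $|\lambda_j(A_n)-\lambda_{\sigma_n(j)}(B_n)|$ lie below $\ve$. Uniform continuity of $F$ bounds the contribution of the "good" indices by $\omega_F(\ve)$ per term, while the "bad" indices contribute at most $2\ve\|F\|_\infty$ in total, so $\limsup_n \bigl|\frac1n\sum F(\lambda_i(A_n))-\frac1n\sum F(\lambda_i(B_n))\bigr|\le \omega_F(\ve)+2\ve\|F\|_\infty$. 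Sending $\ve\to 0$ closes this direction, since the first sum already converges to $\frac1{l(D)}\int_D F(f(x))\,dx$.

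For the converse, assume both $\serie A\sim_\lambda f$ and $\serie B\sim_\lambda f$. The goal is to construct, for every $\ve>0$, permutations certifying $d'(A_n,B_n)<\ve$ eventually. First, by testing $\sim_\lambda$ against a continuous cutoff that vanishes on a disk of radius $R$ and equals $1$ outside a slightly larger disk, both $A_n$ and $B_n$ have only $o(n)$ eigenvalues outside a disk $B_R\cu\f C$ once $R$ is chosen large depending on $f$ and $\ve$. Next, partition $B_R$ into finitely many cells of diameter below $\ve/2$. Approximating cell indicators from above and below by continuous compactly supported bumps and applying the definition of $\sim_\lambda$ to both sequences shows that the per-cell eigenvalue counts of $A_n$ and $B_n$ both approximate $n\mu_f(\text{cell})$ up to a discrepancy which is $o(n)$ for tight enough approximations. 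A greedy cell-by-cell matching then pairs eigenvalues within distance $\ve$ and discards at most $\ve n$ unmatched pairs for large $n$, which is exactly the content of $d'(A_n,B_n)<\ve$.

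The main obstacle is the measure-theoretic bookkeeping in the converse: $\sim_\lambda$ directly controls only averages of continuous test functions, while the matching naturally calls for indicators of cells. The boundary ambiguity forces one to tune the disk radius $R$, the cell diameter, and the slack in the continuous sandwich approximations simultaneously, so that outlier eigenvalues, boundary overcounting, and intra-cell displacement all stay below $\ve$ uniformly in $n$. Conceptually the argument asserts that the $L^\infty$-transport distance between empirical measures sharing a common compactly supported weak limit must vanish; the technical cost is the careful truncation of possibly unbounded spectra before invoking such a weak-convergence-to-matching principle.
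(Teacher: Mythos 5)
Both directions of your argument are sound in outline, but note that the paper itself offers no proof of Theorem \ref{d'}: it imports the result from \cite{barb}, where the surrounding machinery (Lemma \ref{permm}, Theorem \ref{LAMBDAtoGLT}, and the isometry between $d_{acs}$ and convergence in measure) indicates a more structural route through diagonal sequences and the acs framework, rather than your direct matching of empirical spectral distributions. Your forward direction is complete as written: unpacking $d'(A_n,B_n)<\ve$ into a permutation with at most $\ve n$ badly matched pairs, and splitting the Ces\`aro averages via uniform continuity of $F$ plus the bound $2\ve\|F\|_\infty$ on the bad part, is exactly what is needed.

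In the converse, two points need tightening. First, the cutoff you describe (vanishing on a disk, equal to $1$ outside a larger one) is not compactly supported, hence not an admissible test function; use the complementary bump equal to $1$ on the disk and supported slightly beyond it, and note that for a fixed $R$ with $\mu_f(\{|f|>R\})<\ve$ you obtain an asymptotic fraction at most $\ve$ of outlying eigenvalues, not $o(n)$ --- which is all you need. Second, and more substantively, the claim that the per-cell eigenvalue counts equal $n\mu_f(C)+o(n)$ for each cell $C$ ``for tight enough sandwich approximations'' fails for an arbitrary grid: if $\mu_f$ charges the boundary of a cell (e.g.\ $f$ constant on a set of positive measure, with the constant lying on a grid line), the continuous upper and lower approximants of the indicator stay a fixed positive fraction apart no matter how tight the sandwich is, so ``tuning the slack'' cannot repair this. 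The correct fix is to move the grid, not to tighten the bumps: only countably many lines (or circles) can carry positive $\mu_f$-mass, so the partition can be translated so that every cell boundary is $\mu_f$-null; with that choice the sandwich argument does give counts $n\mu_f(C)+o(n)$ for both sequences, and your greedy within-cell matching, together with the $O(\ve n)$ unmatched and outlying eigenvalues, yields $\limsup_n d'(A_n,B_n)\le C\ve$ for every $\ve$, hence $d'(\serie A,\serie B)=0$. With these two repairs your argument is correct and provides a self-contained alternative to the cited proof.
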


It is easy to check that $d'(\serie A,\serie B)\le d(\serie A,\serie B)$ leading to an easy corollary.

\begin{corollary}\label{d}
	If $\serie A\sim_\lambda f(x)$, then
	\[
	\serie B\sim_\lambda f(x) \impliedby d(\serie A,\serie B)=0.
	\]
\end{corollary}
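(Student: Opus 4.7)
The plan is to derive the corollary as an immediate consequence of Theorem \ref{d'}, by verifying the pointwise comparison $d'(\serie A,\serie B) \le d(\serie A,\serie B)$ that the text asserts is easy to check. Once this inequality is in hand, the hypothesis $d(\serie A,\serie B)=0$ forces $d'(\serie A,\serie B)=0$, and Theorem \ref{d'} applied to $\serie A\sim_\lambda f(x)$ yields $\serie B\sim_\lambda f(x)$.

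First I would establish the comparison at the matrix level. Let $\sigma^\star\in S_n$ be a permutation achieving the minimum in the definition
\[
d(A,B) = \min_{\sigma\in S_n}\max_{i=1,\dots,n}|\lambda_i(A)-\lambda_{\sigma(i)}(B)|,
\]
so that $d(A,B) = \max_i|\lambda_i(A)-\lambda_{\sigma^\star(i)}(B)| = |\lambda(A)-\lambda_{\sigma^\star}(B)|_1^{\downarrow}$. In the definition of $d'(A,B)$ I take the same permutation $\sigma^\star$ and choose the index $i=1$ in the inner minimum, so that the corresponding term equals
\[
\frac{1-1}{n} + |\lambda(A)-\lambda_{\sigma^\star}(B)|_1^{\downarrow} = d(A,B).
\]
Since both the minimum over $i$ and the minimum over $\sigma$ can only make the expression smaller, this gives $d'(A,B)\le d(A,B)$.

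Passing to sequences is immediate: taking $\limsup_{n\to\infty}$ of $d'(A_n,B_n)\le d(A_n,B_n)$ preserves the inequality, so $d'(\serie A,\serie B) \le d(\serie A,\serie B)$. Therefore, if $d(\serie A,\serie B)=0$ then $d'(\serie A,\serie B)=0$, and Theorem \ref{d'} yields $\serie B\sim_\lambda f(x)$.

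There is really no obstacle to speak of: the content of the corollary is entirely captured by the elementary inequality $d'\le d$, and the non-trivial work has already been done in Theorem \ref{d'}. The only point requiring a moment of care is ensuring that the permutation optimal for $d$ is admissible in the expression defining $d'$, which is clear since $d'$ also minimises over all $\sigma\in S_n$.
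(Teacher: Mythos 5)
Your argument is correct and matches the paper's route exactly: the paper also deduces the corollary from Theorem \ref{d'} via the inequality $d'(\serie A,\serie B)\le d(\serie A,\serie B)$, which it leaves as ``easy to check'' and which you verify properly by taking the optimal permutation and the index $i=1$ in the definition of $d'$. No gaps.
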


\begin{theorem}
	Given $\serie A\sim_\lambda f$, there exists a sequence of $\ve_n>0$ such that
	\[
	\|N_n\|\le \ve_n \implies \{A_n + N_n\}_n\sim_\lambda f
	\] 
\end{theorem}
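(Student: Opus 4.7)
The plan is to reduce the claim to Corollary \ref{d} by constructing the thresholds $\ve_n$ pointwise in $n$, exploiting the fact that the hypothesis on $\|N_n\|$ is allowed to depend on $n$.

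Concretely, I would apply Corollary \ref{pert} separately to each matrix $A_n$. For each $n$, invoking the corollary with the matrix $A=A_n$ and the tolerance $\ve=1/n$, I obtain a strictly positive threshold $\ve_n>0$ such that any perturbation $N$ with $\|N\|\le \ve_n$ satisfies
\[
d(A_n, A_n+N) \le \frac{1}{n}.
\]
This defines the desired sequence $\{\ve_n\}_n$. Note that no uniformity is claimed: $\ve_n$ depends on the Jordan structure and the conditioning of the eigenvector basis of $A_n$, which is permitted since the statement only requires existence of \emph{some} sequence.

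Now suppose $\{N_n\}_n$ satisfies $\|N_n\|\le \ve_n$ for every $n$. By construction,
\[
d(\serie A, \{A_n+N_n\}_n) = \limsup_{n\to\infty} d(A_n, A_n+N_n) \le \limsup_{n\to \infty} \frac{1}{n} = 0.
\]
Since $\serie A\sim_\lambda f$, Corollary \ref{d} immediately yields $\{A_n+N_n\}_n\sim_\lambda f$, which is the claim.

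There is essentially no hard step here: the argument is a diagonal selection combined with the pseudometric $d$ being controlled by Corollary \ref{pert}. The only point worth emphasizing is that the threshold $\ve_n$ may degenerate very quickly (for instance like $\gamma_n^{m_n}/(2^{m_n-1}k_2(V_n))$ from Lemma \ref{BF2}, where $m_n$ and $k_2(V_n)$ may blow up with $n$), so the result gives no quantitative information; it only asserts that \emph{some} admissible sequence $\{\ve_n\}_n$ exists. This is consistent with the counterexamples of Section 3.1, where the perturbations have norm $\|Y_n\|=1$ or $\|Y_n\|=1/n$ but fall outside any such sequence $\{\ve_n\}_n$ associated with the given $\serie X$.
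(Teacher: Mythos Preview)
Your proof is correct and follows essentially the same approach as the paper's own argument: apply Corollary~\ref{pert} to each $A_n$ with tolerance $1/n$ to obtain $\ve_n$, deduce $d(\serie A,\{A_n+N_n\}_n)=0$, and conclude via Corollary~\ref{d}. Your write-up simply spells out the details and adds commentary that the paper omits.
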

\begin{proof}
	Using Corollary \ref{pert}, we can find  $\ve_n$ such that $d(A_n, A_n+N_n) \le 1/n$, so that $d(\serie A,\{A_n + N_n\}_n)=0$ and  Corollary \ref{d} leads to the thesis.
\end{proof}

Eventually, we can explore the connections between $d'$ and $d_{acs}$.

\begin{lemma}\cite{barb}\label{permm}
	Given $\serie D,\serie{D'} \in \mathscr  E$ sequences of diagonal matrices, there exists a sequence $\serie P$ of permutation matrices such that
	\[
	d'(\serie {D'},\serie{D}) =   d_{acs}(\serie{D'},\{P_nD_nP_n^T\}_n).
	\]
\end{lemma}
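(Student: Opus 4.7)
The plan is to construct $\serie P$ explicitly by choosing, at each level $n$, a permutation matrix $P_n$ that implements an optimal matching between the spectra of $D'_n$ and $D_n$ in the sense of $d'(D'_n,D_n)$. Once this matching is built into the sequence itself, the quantity $p(D'_n-P_nD_nP_n^T)$ will reduce exactly to the inner expression defining $d'$, and the equality of the two pseudodistances will follow just by taking $\limsup_n$.

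First I would exploit that, since $D'_n$ and $D_n$ are diagonal, the difference $\Delta_n := D'_n - P_n D_n P_n^T$ is diagonal for \emph{every} permutation matrix $P_n$: conjugation by $P_n$ simply reorders the diagonal entries of $D_n$. The singular values of a diagonal matrix are the absolute values of its diagonal entries in non-increasing order. Hence, if $P_n$ is chosen to implement a permutation $\sigma_n \in S_n$, in the sense that the diagonal of $P_n D_n P_n^T$ reads $(\lambda_{\sigma_n(i)}(D_n))_{i=1}^n$, then the ordered singular values of $\Delta_n$ are exactly $|\lambda(D'_n)-\lambda_{\sigma_n}(D_n)|_i^{\downarrow}$ for $i=1,\ldots,n$.

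Next, I would take $\sigma_n$ to be a permutation attaining the minimum defining $d'(D'_n,D_n)$, which exists because $S_n$ is finite, and let $P_n$ be the associated permutation matrix. Unpacking the definition of $p$ then gives
\[
p(\Delta_n) \;=\; \min_{i=1,\ldots,n+1}\left\{\frac{i-1}{n} + |\lambda(D'_n)-\lambda_{\sigma_n}(D_n)|_i^{\downarrow}\right\} \;=\; d'(D'_n,D_n),
\]
with the convention $|\cdot|_{n+1}^{\downarrow}=0$, so that the extra $i=n+1$ summand equals $1$ and is harmlessly absorbed into the minimum. Applying $\limsup_n$ to both sides and using $\dacs{\serie A}{\serie B} = \rho(\{A_n-B_n\}_n) = \limsup_n p(A_n-B_n)$ yields
\[
\dacs{\serie{D'}}{\{P_n D_n P_n^T\}_n} \;=\; \limsup_{n\to\infty} d'(D'_n,D_n) \;=\; d'(\serie{D'},\serie D),
\]
which is the content of the lemma.

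The only delicate point is purely combinatorial bookkeeping: one must verify that, for this choice of $P_n$, the optimal permutation $\sigma_n$ in the definition of $d'$ genuinely lines the diagonals up so that the singular values of $\Delta_n$ become the sorted $|\lambda(D'_n)-\lambda_{\sigma_n}(D_n)|^{\downarrow}$, and that the asymmetry between the range $i=1,\ldots,n$ appearing in $d'$ and the range $i=1,\ldots,n+1$ appearing in $p$ does not break the equality. Both are straightforward once the definitions are unpacked, and I do not expect any analytic difficulty beyond this level of care.
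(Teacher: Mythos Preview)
The paper does not actually prove this lemma; it is quoted from \cite{barb} without proof. The remark immediately following the statement, however, tells you that the proof in \cite{barb} constructs the $P_n$ so as to \emph{realize the minimum} in $d'(\serie{D'},\serie D)=\min_{\serie P}d_{acs}(\serie{D'},\{P_nD_nP_n^T\}_n)$, i.e., it picks, at each level $n$, a permutation that is optimal for the inner minimum in $d'(D'_n,D_n)$. This is exactly your construction, so your approach coincides with the cited one.

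One point deserves a sharper justification than ``harmlessly absorbed.'' With the definition of $d'$ as written in this paper (inner minimum over $i=1,\dots,n$), your computation gives
\[
p(\Delta_n)=\min\Bigl\{\,\min_{i=1,\dots,n}\Bigl(\tfrac{i-1}{n}+|\lambda(D'_n)-\lambda_{\sigma_n}(D_n)|_i^\downarrow\Bigr),\;1\Bigr\}=\min\{d'(D'_n,D_n),\,1\},
\]
so the extra $i=n+1$ term is \emph{not} automatically dominated: if $d'(D'_n,D_n)>1$ then $p(\Delta_n)=1<d'(D'_n,D_n)$ and the pointwise equality fails. The resolution is that in \cite{barb} the inner minimum in $d'$ is taken over $i=1,\dots,n+1$ (with $|\cdot|_{n+1}^\downarrow=0$), which forces $d'\le 1$ and makes the two minima identical; the paper's own follow-up identity $d'=\min_{\serie P}d_{acs}$ already requires $d'\le 1$ since $d_{acs}\le 1$. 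So your argument is correct once the definition of $d'$ is aligned with the source; just state explicitly that you are using the $i=1,\dots,n+1$ convention rather than asserting that the discrepancy is immaterial.
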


\noindent Actually, the proof of the last result given in \cite{barb} prove that the permutations $P_n$ found realize the minimum in
\[
d'(\serie {D'},\serie{D}) =  \min_{\serie P} d_{acs}(\serie{D'},\{P_nD_nP_n^T\}_n).
\]
If we denote as $\serie {D(A)}$ and $\serie {D(B)}$ the diagonal sequences composed by the eigenvalues of $\serie A$ and $\serie B$, then  
\[
d'(\serie A,\serie B) = d'(\serie {D(A)}, \serie {D(B)}) = \min_{P_n} \dacs{\serie {D(A)}}{\{ P_nD(B)_nP_n^T \}_n}
\]
A nice reversal relation between the two pseudodistance is the following.

\begin{lemma}
	\[
	d'(\serie A,\serie B) \ge \inf_{\serie M,\serie N} d_{acs}(\{M^{-1}_nA_nM_n\}_n, \{N_n^{-1}B_nN_n\}_n)
	\]
where the inf is taken among all sequences of invertible matrices.
\end{lemma}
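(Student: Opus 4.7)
My plan is to reduce to the diagonal case via Jordan normal form and then invoke Lemma \ref{permm}. For each $n$ I would pick Jordan decompositions $A_n = V_n J_n^A V_n^{-1}$ and $B_n = W_n J_n^B W_n^{-1}$, and split $J_n^A = D(A)_n + Z_n^A$ and $J_n^B = D(B)_n + Z_n^B$ into their diagonal parts (the eigenvalues in the block order) and their nilpotent strictly-upper-triangular parts. The main tool is the classical scaling trick: the block-diagonal similarity $T_n^{(t)}$, whose contribution on a Jordan block of size $k$ is $\diag(1,t,t^2,\dots,t^{k-1})$, sends $J_n^A$ to $D(A)_n + t\,\wt Z_n^A$ with $\|\wt Z_n^A\|\le 1$, and analogously for $B_n$.

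Next I would let $Q_n$ be a permutation realizing the minimum in Lemma \ref{permm}, so that $\dacs{\serie{D(A)}}{\{Q_n D(B)_n Q_n^T\}_n} = d'(\serie A, \serie B)$, and choose any $t_n \to 0$. Setting $M_n = V_n T_n^{(t_n)}$ and $N_n = W_n T_n^{(t_n)} Q_n^T$, a direct computation gives
\[
M_n^{-1} A_n M_n = D(A)_n + t_n \wt Z_n^A, \qquad N_n^{-1} B_n N_n = Q_n D(B)_n Q_n^T + t_n Q_n \wt Z_n^B Q_n^T.
\]
Both residual perturbations have spectral norm at most $t_n \to 0$, hence by the last point of Lemma \ref{zero_distributed} each of the two conjugated sequences is $d_{acs}$-equivalent to the corresponding diagonal sequence. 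The triangle inequality for $d_{acs}$ then yields
\[
\dacs{\{M_n^{-1} A_n M_n\}_n}{\{N_n^{-1} B_n N_n\}_n} \le \dacs{\serie{D(A)}}{\{Q_n D(B)_n Q_n^T\}_n} = d'(\serie A, \serie B),
\]
and the conclusion follows by taking the infimum over $\serie M, \serie N$.

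The main technical concern is whether the scaling trick is really innocent: the similarities $V_n T_n^{(t_n)}$ may have arbitrarily bad condition numbers as $n$ grows, but this is irrelevant because $d_{acs}$ only measures the spectral norm of the residual $t_n \wt Z_n^A$, which is uniformly controlled by our choice of $t_n$. A more delicate point is the role of $Q_n$: since within a single Jordan block all eigenvalues coincide, we cannot in general realize an arbitrary permutation of the diagonal of $J_n^B$ by merely reordering the Jordan blocks of $B_n$. I sidestep this by absorbing $Q_n$ into the conjugating sequence $N_n$ rather than trying to rearrange blocks inside $J_n^B$, which preserves the identity above without having to analyse the internal structure of the Jordan form.
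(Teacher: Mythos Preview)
Your proof is correct and follows essentially the same route as the paper's: both bring $A_n$ and $B_n$ via similarity to within $\ve$ (in operator norm) of diagonal matrices of their eigenvalues by scaling the Jordan form, align the two diagonals using the optimal permutation coming from Lemma~\ref{permm}, and finish with the triangle inequality for $d_{acs}$. The only cosmetic difference is that the paper argues matrix-by-matrix and asserts a bidiagonal form for $N^{-1}BN$ with the eigenvalues already in the permuted order, whereas you work directly at the sequence level and absorb the permutation $Q_n$ into $N_n$ after scaling --- your handling of this point is in fact cleaner, since it sidesteps the question of whether an arbitrary eigenvalue ordering can be realised on the diagonal of a bidiagonal similar form.
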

\begin{proof}
It is sufficient to prove that for every couple of matrices $A,B$ with the same dimension, the following inequality holds.
	\[
	d'(A,B) \ge \inf_{M,N} d_{acs}(M^{-1}AM,N^{-1}BN)
	\]
From Lemma \ref{permm} and successive speculations, we find permutation matrices $P_n$ such that
	\[
	d'(A,B) = d_{acs}(D(A),PD(B)P^T)
	\]
Let $M,N$ be invertible matrices that bring $A,B$ to a bidiagonal upper triangular form with all the elements on the upper diagonal of norm less than $\ve>0$, and the eigenvalues ordered as in $D(A)$ and $PD(B)P^T$.
\begin{align*}
	d'(A,B) &= d_{acs}(D(A),PD(B)P^T)\\
	&\ge 
	-d_{acs}(D(A),M^{-1}AM)
	+d_{acs}(M^{-1}AM,N^{-1}BN)
	-d_{acs}(N^{-1}BN,PD(B)P^T)\\
	&\ge
	d_{acs}(M^{-1}AM,N^{-1}BN)
	-2\ve\\
	\implies 
	d'(A,B) &\ge \inf_{M,N} d_{acs}(M^{-1}AM,N^{-1}BN).
\end{align*}
\end{proof}

\noindent The result is not sharp. In fact, if
$A_n=J_n$ and $B_n=J_n + e_ne_1^T$, we know that 
\[
d'(\serie A,\serie B) = 1,\qquad \dacs{\serie A}{\serie B}=0.
\]

\subsection{Other Distances}

An idea to solve the perturbation problem is to find pseudodistances on matrices and sequences so that any couple of sequences at zero distance admit the same spectral symbol.

\[
d_\lambda (\serie A,\serie B) = 0 \qquad \serie A\sim_\lambda k \implies \serie B\sim_\lambda k
\]

Define the function
\[
d_N(\serie A,\serie B) = \limsup_{n\to \infty}\frac 1 n \|A_n-B_n\|_1 = p_N(\serie A - \serie B).
\]
$d_N$ respects the properties of a distance, but it may take infinite value. On bounded sequences, the function is a complete pseudodistance thanks to Lemma \ref{compl}. An other complete pseudodistance that employs only the rank of the matrices is
\[
d_R(\serie A,\serie B) = \limsup_{n\to\infty } \frac{\rk(A_n-B_n)}{n} = p_R(\serie A - \serie B).
\]

Notice that acs distance behaves well only on hermitian sequences, so the aim is to penalize the non-hermitianity of matrices.
\[
d_H(\serie A,\serie B) = d_{acs}(\serie {\mc R(A)},\serie {\mc R(B)}) + p_{N}(\serie {\mc I(A)}) + p_{N}(\serie {\mc I(B)})
\]
$d_H$ is a complete pseudodistance that may take infinite value, even if we replace $ d_{acs}(\serie {\mc R(A)},\serie {\mc R(B)})$ with $ d_{acs}(\serie A,\serie B)$.

We will use $d_H$ in the next section and show it is equivalent to Conjecture \ref{conj} and \ref{inv}.

\section{Equivalent Statements}

Here we report some results discovered while working on the main problem.

\begin{lemma}
The statement 
\[
d_H(\serie A,\serie B)=0, \,\,\serie A\sim_\lambda f\implies \serie B \sim_\lambda f
\]
holds if and only if both the following are true.
\begin{enumerate}
\item If $\serie X\sim_\lambda k$ is a sequence of Hermitian matrices and $\|Y_n\|_1=o(n)$, then $\serie X +\serie Y\sim_\lambda k$
\item If $\serie A\sim_\lambda k$ and $\|\Im(A_n)\|_1=o(n)$, then $\{ \Re(A_n)  \}_n\sim_\lambda k$
\end{enumerate}
\end{lemma}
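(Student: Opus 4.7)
My plan is to match each of the three components encoded in $d_H(\serie A,\serie B)=0$—namely $d_{acs}(\Re(\serie A),\Re(\serie B))=0$, $\|\Im(A_n)\|_1=o(n)$, and $\|\Im(B_n)\|_1=o(n)$—to the appropriate tool: statement (2) strips a small imaginary part, statement (1) reattaches a small imaginary part to a Hermitian sequence, and the Hermitian acs-closure lemma stated in Section 2.2 shuttles the symbol across a zero $d_{acs}$-distance within the Hermitian world. The reverse direction will chain these three moves; the forward direction will specialize the input pair so that two of the three conditions trivialize.

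For the reverse implication, assuming (1) and (2) and starting from $\serie A\sim_\lambda f$ with $d_H(\serie A,\serie B)=0$, I propagate the symbol $f$ along
\[
\serie A \;\longrightarrow\; \{\Re(A_n)\}_n \;\longrightarrow\; \{\Re(B_n)\}_n \;\longrightarrow\; \serie B.
\]
The first arrow applies (2) to $\serie A$, since $\|\Im(A_n)\|_1=o(n)$. The third arrow applies (1) with the Hermitian sequence $X_n=\Re(B_n)$ and perturbation $Y_n=\text i\,\Im(B_n)$, whose trace norm equals $\|\Im(B_n)\|_1=o(n)$. The middle arrow is the only one where neither hypothesis applies directly: I invoke the Hermitian acs-closure lemma with the constant approximating family $B_{n,m}:=\Re(A_n)$, which has symbol $f$ for every $m$ (by the first arrow) and acs-converges to $\{\Re(B_n)\}_n$ thanks to Lemma \ref{zero_distributed} applied to the vanishing $d_{acs}$-distance of the real parts.

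For the forward implication, I specialize the hypothesis of the big statement. To recover (1), set $\serie A=\serie X$ and $\serie B=\serie X+\serie Y$: then $\Re(A_n)-\Re(B_n)=-\Re(Y_n)$ has trace norm bounded by $\|Y_n\|_1=o(n)$ and is therefore zero-distributed by Lemma \ref{zero:norm_1}, so $d_{acs}(\Re(\serie A),\Re(\serie B))=0$ via Lemma \ref{zero_distributed}; moreover $\Im(A_n)=0$ and $\|\Im(B_n)\|_1=\|\Im(Y_n)\|_1\le\|Y_n\|_1=o(n)$. To recover (2), set $\serie B=\{\Re(A_n)\}_n$: the real parts coincide, $\Im(B_n)=0$, and $\|\Im(A_n)\|_1=o(n)$ is given. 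In both cases $d_H(\serie A,\serie B)=0$, and the assumed implication delivers the conclusion.

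The main obstacle I anticipate is not analytic but conceptual: one must resist trying to apply (1) directly to the combined perturbation $B_n-A_n$ in the reverse direction, because a vanishing $d_{acs}$-distance of real parts only yields a small-rank-plus-small-norm decomposition, \emph{not} an $o(n)$ trace-norm bound on $\Re(A_n)-\Re(B_n)$. Splitting the journey into three links, with the middle one carried by the Hermitian acs-closure rather than by (1), is precisely what lets the argument go through with no hypothesis beyond (1) and (2).
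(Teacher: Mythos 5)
Your proof is correct and follows essentially the same route as the paper: the forward direction specializes to the pairs $(\serie X,\serie X+\serie Y)$ and $(\serie A,\{\Re(A_n)\}_n)$ exactly as the paper does, and the reverse direction runs the same chain $\serie A\to\{\Re(A_n)\}_n\to\{\Re(B_n)\}_n\to\serie B$ via statement (2), the Hermitian acs-closure across the vanishing $d_{acs}$-distance, and statement (1). Your only addition is to make the middle step explicit with a constant approximating family, which the paper leaves implicit in the phrase ``Hermitian and identified by $d_{acs}$''.
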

\begin{proof}
If the statement on $d_H$ is true, then both 1. and 2. are true, since 
\[
\|\Re(Y_n) \|_1 \le \|Y_n\|_1=o(n),\qquad \|\Im(Y_n)\|_1 \le \|Y_n\|_1=o(n) \implies  
\]
\[
d_H(\serie X, \serie X+\serie Y) = p_{acs}(\{ \Re(Y_n)  \}_n) + p_N(\serie 0) + p_N(\{ \Im(Y_n)  \}_n) =0 
\]
and
\[
d_H(\serie A, \{ \Re(A_n)  \}_n) = p_{acs}(\serie 0) + p_N(\{ \Im(A_n)  \}_n) + p_N(\serie 0) =0.
\]\\

\noindent If 1. and 2. are true, consider $\serie A$ e $\serie B$ such that $d_H(\serie A,\serie B)=0$ and  $\serie A\sim_\lambda f$. Then
\[
d_{acs}(\{ \Re(A_n)  \}_n,\{ \Re(B_n)  \}_n )= p_N(\{ \Im(A_n)  \}_n) = p_N(\{ \Im(B_n)  \}_n) =0,
\]
so we can apply 2. and obtain $\{ \Re(A_n)  \}_n\sim_\lambda f$. The sequences $\{ \Re(A_n)  \}_n,\{ \Re(B_n)  \}_n$ are Hermitian and identified by $d_{acs}$, so 
$\{ \Re(B_n)  \}_n\sim_\lambda f$. Eventually, $p_N(\{ \Im(B_n)  \}_n) =0$ implies that  $\|\Im(B_n)\|_1 =o(n)$, so we use 1. and conclude that
\[
\serie B = \{ \Re(B_n)  \}_n + \{ \Im(B_n)  \}_n\sim_\lambda f.
\]
\end{proof}

\begin{lemma}
Consider the following statements.
\begin{enumerate}
\item If $\serie X\sim_\lambda k$ is an Hermitian sequence and $\|Y_n\|_1=o(n)$, then $\serie X +\serie Y\sim_\lambda k$
\item If $\serie X\GLT k$ is an Hermitian sequence and $\|Y_n\|_1=o(n)$, then $\serie X +\serie Y\sim_\lambda k$
\item If $\serie X\GLT k$ and $\|\Im(X_n)\|_1=o(n)$, then $\serie X\sim_\lambda k$
\item If $\serie D\GLT k$ is a sequence of real diagonal matrices and $\serie Y$ is a sequence of skew-Hermitian matrices with $\|Y_n\|_1=o(n)$, then $\serie D + \serie Y\sim_\lambda k$
\item If $\serie X\sim_\lambda k$ is an Hermitian sequence and $\serie D$ is a sequence of real diagonal matrices with $\|D_n\|_1=o(n)$, then $\serie X + i\serie D\sim_\lambda k$
\item If $\serie X\GLT k$ is an Hermitian sequence and $\serie D$ is a sequence of real diagonal matrices with $\|D_n\|_1=o(n)$, then $\serie X + i\serie D\sim_\lambda k$
\end{enumerate}
Statements 1. 2. 3. 4. 5. are equivalent and they all implies 6.
\end{lemma}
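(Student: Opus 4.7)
The plan is to prove the cycle $1 \Rightarrow 2 \Rightarrow 3 \Rightarrow 4 \Rightarrow 5 \Rightarrow 1$ together with the immediate $5 \Rightarrow 6$. The substance is concentrated in the two unitary-diagonalisation steps $4 \Rightarrow 5$ and $5 \Rightarrow 1$; every other implication should be a short combination of the GLT algebra rules, Lemma \ref{GLT:hermitian_spectral}, Lemma \ref{zero:norm_1}, Theorem \ref{LAMBDAtoGLT}, and the closure of Hermitian spectral-symbol sequences under acs-convergence stated in Section~2.2.

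The easy implications come first. $1 \Rightarrow 2$ and $5 \Rightarrow 6$ are one-line applications of Lemma \ref{GLT:hermitian_spectral}, which promotes a Hermitian GLT hypothesis to a spectral-symbol hypothesis. For $2 \Rightarrow 3$, I would set $T_n = \Re(X_n)$: the hypothesis $\|\Im(X_n)\|_1 = o(n)$ and Lemma \ref{zero:norm_1} make $i\Im(\serie X)$ zero-distributed, so its GLT symbol is $0$, and the algebra rules yield $\serie T = \serie X - i\Im(\serie X) \GLT k$. Thus $\serie T$ is Hermitian and GLT with symbol $k$, and applying statement~2 with perturbation $Y_n = i\Im(X_n)$ of trace norm $o(n)$ gives $\serie X = \serie T + i\Im(\serie X) \sim_\lambda k$. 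For $3 \Rightarrow 4$, I would write $X_n = D_n + Y_n$; zero-distribution of $\serie Y$ together with the algebra property of $\GLT$ gives $\serie X \GLT k$, and skew-Hermitianity of $Y_n$ forces $\Im(X_n) = -iY_n$ with trace norm $o(n)$, so statement~3 concludes.

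For $4 \Rightarrow 5$, I would diagonalise $X_n = U_n \Lambda_n U_n^H$ with $\Lambda_n$ real diagonal, invoke Theorem \ref{LAMBDAtoGLT} to obtain permutations $P_n$ making $\{P_n \Lambda_n P_n^T\}_n$ a real-diagonal GLT sequence with symbol $k$, and observe that $X_n + iD_n$ is unitarily similar -- hence cospectral -- with $P_n \Lambda_n P_n^T + iP_n U_n^H D_n U_n P_n^T$. The second summand is skew-Hermitian, and by unitary invariance of the trace norm its $\|\cdot\|_1$ equals $\|D_n\|_1 = o(n)$, so statement~4 applies. For $5 \Rightarrow 1$, I would split $Y_n = H_{1,n} + iH_{2,n}$ with both $H_{j,n}$ Hermitian of trace norm bounded by $\|Y_n\|_1 = o(n)$. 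The sum $\serie X + \serie{H_1}$ is Hermitian, differs from $\serie X$ by a zero-distributed sequence (Lemma \ref{zero:norm_1}), and hence is at $d_{acs}$-distance $0$ from $\serie X$; the Hermitian acs-preservation lemma then forces $\serie X + \serie{H_1} \sim_\lambda k$. Finally, diagonalising $H_{2,n} = V_n \Lambda_n V_n^H$ with $\Lambda_n$ real diagonal of trace norm $o(n)$, the sum $X_n + Y_n$ is unitarily similar to $V_n^H(X_n + H_{1,n})V_n + i\Lambda_n$, which is of the Hermitian-plus-$i$-real-diagonal form required by statement~5.

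The main obstacle will be keeping track of the fact that unitary conjugation preserves eigenvalues but in general destroys any GLT or distributional structure of the intermediate matrices: in $4 \Rightarrow 5$ this is what forces Theorem \ref{LAMBDAtoGLT} into the argument, in order to reinstate a GLT-diagonal after passing to eigenvalues; and in $5 \Rightarrow 1$ it is the very reason that statement~5 is phrased with the weaker $\sim_\lambda$ hypothesis rather than $\GLT$, since $\sim_\lambda$ is all that survives an arbitrary unitary change of basis.
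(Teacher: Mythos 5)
Your proof is correct, but it closes the equivalence along a different cycle than the paper: you prove $1\Rightarrow2\Rightarrow3\Rightarrow4\Rightarrow5\Rightarrow1$ (plus $5\Rightarrow6$), whereas the paper proves $1\Rightarrow2\Rightarrow3\Rightarrow4\Rightarrow1$ together with $1\Rightarrow5\Rightarrow4$ and $5\Rightarrow6$. The easy implications coincide, and your two substantial steps are mirror images of the paper's: your $4\Rightarrow5$ plays the role of the paper's $5\Rightarrow4$ (but in the harder direction, which is why you need Theorem \ref{LAMBDAtoGLT} to turn the eigenvalue diagonal $\Lambda_n$ into a GLT diagonal before invoking statement 4 --- exactly the device the paper reserves for its $4\Rightarrow1$), and your $5\Rightarrow1$ replaces the paper's $4\Rightarrow1$. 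The genuine difference is in how the Hermitian part of the perturbation is absorbed: the paper pushes $\Re(Y_n)$ through the GLT machinery, applying Theorem \ref{LAMBDAtoGLT} twice inside $4\Rightarrow1$, while you dispose of it directly with the Hermitian acs-closure lemma of Section 2.2 (constant approximating family, $d_{acs}$-distance zero via Lemmas \ref{zero_distributed} and \ref{zero:norm_1}), which is slightly more elementary and means Theorem \ref{LAMBDAtoGLT} is used only once in your whole cycle. One cosmetic point: in $4\Rightarrow5$ the symbol delivered by Theorem \ref{LAMBDAtoGLT} is $k\otimes1$ (or $h\otimes1$ for a rearranged $h$), not $k$ itself; statement 4 then yields $\sim_\lambda k\otimes1$, which has the same distribution as $k$, so the conclusion stands --- the paper glosses this identically, so it is not a gap.
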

\begin{proof}
Let us show the chain of implications 
\[
 1. \implies 2. \implies 3. \implies 4. \implies 1.
\]
and
\[
1.\implies 5. \implies 4.\,\, 6.
\]

\noindent $1.\implies2.)$  Remembering Lemma \ref{GLT:hermitian_spectral}, we know that if  $X_n$ are Hermitian, then          
\[
\serie X\GLT k\implies \serie X\sim_\lambda k.\]

\noindent $2.\implies3.)$ 
\[
\serie X\GLT k \implies \{\Re(X_n)\}\GLT \Re(k),\qquad \{\Im(X_n)\}\GLT \Im(k)
\]
but $\|\Im(X_n)\|_1=o(n)$ is a zero-distributed sequence thanks to Lemma \ref{zero:norm_1}, so $\Im(k)=0$ and consequentially $\{\Re(X_n)\}\GLT \Re(k)=k$. Using 2., we obtain
	\[
	\serie X = \{\Re(X_n) + i\Im(X_n)\}\sim_\lambda k.
	\]
\noindent$3.\implies 4.)$
 $\|Y_n\|_1=o(n)$ is a zero-distributed sequence thanks to Lemma \ref{zero:norm_1}, so we use the algebra structure of the GLT space to obtain $\{D_n+Y_n\}\GLT k$. Notice that
 \[
 \Re(D_n+Y_n) = D_n,\quad \Im(D_n+Y_n) = Y_n,
 \]
so we apply 3. on $X_n=D_n+Y_n$ and conclude 
 $\serie D + \serie Y\sim_\lambda k$\\

\noindent$4.\implies 1.)$ 
Let $X_n = Q_nD_nQ_n^H$ be a diagonalization of the Hermitian matrices $X_n$.  If $\serie X\sim_\lambda k$, then $\serie D\sim_\lambda k$, since they have the same eigenvalues. Given $h:[0,1]\to \f C$ a rearranged version of $k$, $\serie D\sim_\lambda h$ holds and we can use Theorem \ref{LAMBDAtoGLT} to find permutation matrices $P_n$ such that $\{P_nD_nP_n^T\}_n\GLT h\otimes 1$. Notice that $\|\Re(Y_n)\|_1 \le (\|Y_n\|_1 + \|Y_n^H\|_1) /2 = \|Y_n\|_1$, so $\|Y_n\|_1 = o(n)\implies \|\Re(Y_n)\|_1 =o(n)$ and the same holds for $\|\Im(Y_n)\|$. In particular they are both zero-distributed sequences  thanks to Lemma \ref{zero:norm_1}. Notice that  $\|P_nQ_n^H \Re(Y_n) Q_nP^T_n\|_1 = o(n)$, so the sequence is also zero-distributed, and
\[
\{P_nD_nP_n^T + P_nQ_n^*\Re(Y_n)Q_nP_n^T \}\GLT h\implies \serie X + \{\Re(Y_n)\}\sim_\lambda k
\] 
Repeat the reasoning with the Hermitian matrices $X_n + \Re(Y_n) = U_nD'_nU_n^*$, where  $\serie{D'}\GLT h\otimes 1$, and since $\|U_n^*\Im(Y_n)U_n\|_1=o(n)$, we can use 4. to conclude
\[
\serie{D'} + \{U_n^*\Im(Y_n)U_n\} \sim_\lambda h  \implies\serie X +\serie Y\sim_\lambda k
\]

\noindent$1.\implies 5.)$ Just notice that $\|\text iD_n\|_1 = \|D_n\|_1 = o(n)$.

\noindent$5.\implies 4.)$ 
Given $\serie Y$ skew-Hermitian matrices with
$\|Y_n\|_1=o(n)$, let $D'_n$ be diagonal real matrices such that  $Y_n=iQ_nD'_nQ_n^H$ is an unitary diagonalization. If $\serie D\GLT k$ are diagonal real matrices, then they are Hermitian and thanks to Lemma \ref{GLT:hermitian_spectral}, $\serie D\sim_\lambda k$ and $\{ Q_n^HD_nQ_n \}_n\sim_\lambda k$ . Using 5. we conclude
\[
\{ Q_n^HD_nQ_n + iD'_n \}_n\sim_\lambda k \implies \{ D_n + Y_n \}_n\sim_\lambda k 
\]
\noindent$5.\implies 6.)$ Use Lemma \ref{GLT:hermitian_spectral}.

%
\end{proof}

\section{Perturbation of Normal sequences}

When dealing with normal matrices instead of Hermitian matrices, we get different results.

\begin{lemma}
Let $X_n$ be normal matrices, with $\serie X\sim_\lambda f$.Consider the following statements:
\begin{enumerate}
\item $\serie Y$ zero-distributed and $X_n+Y_n$ normal,
\item $\|Y_n\|_p = o(1)$ where $\|\cdot\|_p$ is the $p$ Schatten norm for some $1\le p\le 2$,
\item $\|Y_n\|_p = o(n^{\frac 2p-1})$ where $\|\cdot\|_p$ is the $p$ Schatten norm for some $2\le p <\infty$,
\item $\|Y_n\|=o(\frac 1n)$.
\end{enumerate}
If any of them holds, then
\[
\serie X+\serie Y \sim_\lambda f.
\]
\end{lemma}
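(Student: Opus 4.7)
The plan is to verify, for each of the four conditions separately, the convergence
\[
\frac1n \sum_{i=1}^n G(\lambda_i(X_n + Y_n)) \longrightarrow \int_0^1 G(f(x))\, dx, \qquad G \in C_c(\f C).
\]
Writing $X_n = Q_n D_n Q_n^H$ unitarily, the substitution $Y_n \mapsto Q_n^H Y_n Q_n$ leaves Schatten norms, operator norm, rank, zero-distributedness, and the normality of the sum unchanged, so without loss of generality $X_n$ is diagonal. After this reduction, condition 1 is handled by a shift trick based on singular values, while conditions 2--4 are treated uniformly via a Hoffman-Wielandt-type inequality.

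For condition 1, the key observation is that for every $c \in \f C$ both $X_n - cI$ and $X_n + Y_n - cI$ remain normal, so their singular values equal $|\lambda_i - c|$. Since $X_n - cI \sim_\sigma f - c$ and $\serie Y \sim_\sigma 0$, Lemma~\ref{zero_distributed}(3) yields $\{(X_n - cI) + Y_n\} \ea \{X_n - cI\}$; the sum inherits the singular value symbol $f - c$, and normality of $X_n + Y_n$ then gives
\[
\frac1n \sum_{i=1}^n F(|\lambda_i(X_n+Y_n) - c|) \longrightarrow \int_0^1 F(|f(x) - c|)\, dx
\]
for every $c \in \f C$ and every $F \in C_c(\f R)$. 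Letting $\nu_n$ denote the empirical spectral distribution of $X_n + Y_n$ and $\mu_f$ the distribution of $f$, this says every marginal $|\cdot - c|_*\nu_n$ converges weakly to $|\cdot - c|_*\mu_f$. The case $c = 0$ forces tightness of $\nu_n$, so Prokhorov yields weakly convergent subsequences; and for any weak sub-sequential limit $\nu$, the continuous mapping theorem gives $\nu(B_r(c)) = \mu_f(B_r(c))$ for every open ball $B_r(c) \subset \f C$, which by the Vitali covering lemma forces $\nu = \mu_f$. Hence $\serie{X+Y} \sim_\lambda f$.

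For conditions 2, 3, 4, the first step is to observe that each implies $\|Y_n\|_2 = o(1)$: condition 2 by Schatten monotonicity $\|Y_n\|_2 \le \|Y_n\|_p$ for $p \le 2$; condition 3 by H\"older, $\|Y_n\|_2 \le n^{1/2 - 1/p}\|Y_n\|_p = o(n^{1/p - 1/2}) = o(1)$ for $p \ge 2$; condition 4 by $\|Y_n\|_2 \le \sqrt n\,\|Y_n\| = o(n^{-1/2})$. Then invoke the non-normal extension of the Hoffman-Wielandt inequality due to J.-G.~Sun, which provides a constant $C > 0$ with
\[
\min_{\pi \in S_n} \sum_{i=1}^n |\lambda_i(A) - \lambda_{\pi(i)}(B)|^2 \le C\,n\,\|A - B\|_2^2
\]
for $A$ normal and $B$ arbitrary. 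Applied to $A = X_n$ and $B = X_n + Y_n$, Cauchy-Schwarz yields $\min_\pi \frac1n \sum_i |\lambda_i(X_n) - \lambda_{\pi(i)}(X_n+Y_n)| \le \sqrt C\,\|Y_n\|_2 = o(1)$. For any Lipschitz $G \in C_c(\f C)$ with constant $L$, a minimizing $\pi$ then gives
\[
\left| \frac1n \sum_i G(\lambda_i(X_n+Y_n)) - \frac1n \sum_i G(\lambda_{\pi(i)}(X_n)) \right| \le \frac{L}{n}\sum_i|\lambda_i - \lambda'_{\pi(i)}| = o(1),
\]
and since $\serie X \sim_\lambda f$ handles the right-hand side, uniform approximation of $C_c(\f C)$ by Lipschitz compactly supported functions closes the case.

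The chief obstacle is the non-normal Hoffman-Wielandt bound: the classical inequality demands that \emph{both} matrices be normal, and Sun's refinement (with the extra factor linear in $n$) must be either cited precisely or reproved, since it is the only non-elementary input. A fallback using Bauer-Fike (Lemma~\ref{BF}) combined with a Hall-type matching to extract a permutation is plausible but considerably messier and loses the clean Cauchy-Schwarz step. A secondary subtlety is the Vitali step in condition 1 promoting equality of ball-measures to equality of measures; this must be carried out with care when $f$ is unbounded, so that $\mu_f$ may have non-compact support and one must first secure tightness from the $c = 0$ marginal.
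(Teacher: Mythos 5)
Your proposal is correct and rests on the same two pillars as the paper's proof. For condition 1 the paper simply cites Lemmas 11 and 12 of \cite{norm}: the shift trick $X_n-cI_n$, the acs equivalence with $X_n+Y_n-cI_n$, and the fact that a normal sequence whose shifted singular value distributions are $|f-c|$ for every $c$ must satisfy $\sim_\lambda f$; you reprove that last ingredient by hand (tightness from the $c=0$ marginal, Prokhorov, and identification of a subsequential limit from its radial marginals), which is legitimate, though the final identification step is really a Besicovitch-differentiation (or Fourier/Gaussian-convolution) argument rather than the Vitali covering lemma, and this should be stated precisely if you keep it self-contained. For conditions 2--4 your ``non-normal Hoffman--Wielandt'' bound is exactly the paper's key input, cited there as Problem VI.8.11 of \cite{Bhatia} with constant $1$, i.e. $\min_\sigma\bigl(\sum_i|\lambda_i-\mu_{\sigma(i)}|^2\bigr)^{1/2}\le\sqrt n\,\|A-B\|_2$ for $A$ normal and $B$ arbitrary, so no appeal to an unverified constant is needed. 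The packaging differs slightly: the paper keeps the Schatten exponent $p$ in case 3 (using the $n^{1-1/p}$ version of the same inequality), counts the proportion $k_n/n$ of badly matched eigenvalues, and concludes via the generalized optimal matching distance $d'$ and Theorem \ref{d'}, whereas you reduce all three cases to $\|Y_n\|_2=o(1)$ by H\"older and conclude with Lipschitz test functions and uniform approximation; both routes are valid, yours unifies cases 2--4 at the cost of bypassing the $d'$ machinery the paper is advertising, while the paper's counting argument avoids the Lipschitz-approximation step.
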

\begin{proof}Remember that Schatten norms respects $\|A\|_p\le \|A\|_q$ for $\infty \ge p\ge q\ge 1$.\\

\noindent$1.)$ A normal sequence $\serie X\sim_\lambda f$ respects the hypothesis of Lemma 11 in \cite{norm}, and $\serie Y$ is zero-distributed so
\[
\{ X_n+Y_n - cI_n \}_n\ea \{ X_n - cI_n \}_n \sim_\sigma f(z) - c \qquad\forall c\in \f C.
\]
Given that $X_n+Y_n$ is normal, using Lemma 12 of \cite{norm}, we conclude that $\{X_n+Y_n\}_n\sim_\lambda k$.\\

\noindent$2.)$ It is sufficient to prove it when $p=2$. Using Problem VI.8.11 in \cite{Bhatia}, we know that for  every normal matrix $A$ and any other matrix $B$, we have 
\[
\min_{\sigma\in S_n}\left( \sum_{i=1}^n |\lambda_i-\mu_{\sigma(i)}|^2 \right)^{1/2} \le \sqrt n\|A-B\|_2.
\]
If  $\tau$ is the permutation that realizes the minimum for $X_n$ and $X_n+Y_n$, and $k_n$ is the number of indices such that
$ |\lambda_i-\mu_{\sigma(i)}|>\ve $, then
\[
 \sqrt {k_n}\ve \le \left( \sum_{i=1}^n |\lambda_i-\mu_{\sigma(i)}|^2 \right)^{1/2} \le \sqrt n\|Y_n\|_2,\qquad 
\frac {k_n}n \le  \left(\frac{\|Y_n\|_2}{\ve}\right)^2\xrightarrow{n\to\infty} 0.
\]
Consequentially,
\[
d'(\serie X,\serie X+\serie Y) \le  \limsup_{n\to\infty} \frac{k_n}{n} + \ve = \ve
\]
for every $\ve>0$, so we use Theorem \ref{d'} and conclude
\[
\serie X+\serie Y \sim_\lambda f.
\]
\noindent$3.)$ Using Problem VI.8.11 in \cite{Bhatia}, , we know that for  every normal matrix $A$ and any other matrix $B$, we have 
\[
\min_{\sigma\in S_n}\left( \sum_{i=1}^n |\lambda_i-\mu_{\sigma(i)}|^p \right)^{1/p} \le n^{1-1/p}\|A-B\|_p \quad \forall\,p\ge 2.
\]
If  $\tau$ is the permutation that realizes the minimum for $X_n$ and $X_n+Y_n$, and $k_n$ is the number of indices such that
$ |\lambda_i-\mu_{\sigma(i)}|>\ve $, then
\[
 k_n^{1/p}\ve \le \left( \sum_{i=1}^n |\lambda_i-\mu_{\sigma(i)}|^p \right)^{1/p} \le n^{1-1/p}\|Y_n\|_p,\qquad 
\frac {k_n}n \le  \left(\frac{\|Y_n\|_p}{\ve n^{\frac 2p -1 }}\right)^p\xrightarrow{n\to\infty} 0
\]
consequentially
\[
d'(\serie X,\serie X+\serie Y) \le  \limsup_{n\to\infty} \frac{k_n}{n} + \ve = \ve
\]
for every $\ve>0$, so  we use Theorem \ref{d'} and conclude
\[
\serie X+\serie Y \sim_\lambda f.
\]
\noindent $4.)$ Same proof as 3., with $p=\infty$.
\end{proof}

\end{document}